\newtheorem{prop}{Proposition}
\newtheorem{theorem}{Theorem}
\newtheorem{coroll}{Corollary}
\newtheorem{lemma}{Lemma}
\newtheorem{remark}{Remark}
\newcommand{\ig}{[\hspace{-1.5pt}[}
\newcommand{\id}{]\hspace{-1.5pt}]}
\newcommand{\C}{\mathbb{C}}
\newcommand{\un}{\mathbf{1}}
\renewcommand{\P}{\mathbb{P}}
\newcommand{\R}{\mathbb{R}}
\renewcommand{\H}{\mathcal{H}}
\newcommand{\T}{\mathbb{T}}
\author{Jean B\'{e}rard,  Jean-Baptiste Gou\'{e}r\'{e}}
\address[Jean B\'{e}rard]{\noindent Institut Camille Jordan, UMR CNRS 5208, 43, boulevard du 11 novembre
1918, Villeurbanne, F-69622, France; universit\'{e} de Lyon, Lyon, F-69003, France; 
universit\'{e} Lyon 1, Lyon, F-69003, France
\newline
e-mail:  \rm \texttt{jean.berard@univ-lyon1.fr}}
\address[Jean-Baptiste Gou\'{e}r\'{e}]{\noindent Laboratoire MAPMO -
  UMR 6628, Universit\'e d'Or\-l\'eans, B.P. 6759, 45067 Orl\'eans Cedex
  2, France. 
\newline 
E-mail: \rm
  \texttt{Jean-Baptiste.Gouere@univ-orleans.fr}.}
\date{}
\thanks{The authors would like to thank J. Quastel for suggesting that the approach developed in \cite{MueMytQua, MueMytQua2} might be exploited to re-derive
the asymptotic behavior of the survival probability of the branching random walk.}
\title[Survival probability of the branching random walk]{Survival probability of the branching random walk killed below a linear boundary}
\begin{document}

\begin{abstract}
We give an alternative proof of a result by N. Gantert, Y. Hu and Z. Shi on the asymptotic behavior of the survival probability of the branching random walk killed below a linear boundary, in the special case of deterministic binary branching and bounded random walk steps. Connections with the Brunet-Derrida theory of stochastic fronts are discussed. 
\end{abstract}

\maketitle

\section{Introduction}

%
%

Consider a real-valued branching random walk, with deterministic binary branching and i.i.d. random steps. One possible way of constructing such a branching random walk is to start with a rooted infinite binary tree $\T$ and 
 a collection $(\zeta(e))_{e}$ of  i.i.d. copies of a random variable $\zeta$, where the index $e$ runs over the edges of $\T$. Given $x \in \R$, the value of the branching random walk at every vertex $u$ of $\T$ is then defined as 
$$S(u) := x + \sum_{e \in \ig root ,  u  \id } \zeta(e),$$
where  $\ig root , u \id$ denotes the set of edges of $\T$ lying between $u$ and the root. With this definition, for each infinite ray $root=:x_0,x_1,\ldots$ in $\T$, the sequence $S(x_i)_{i=0,1,\cdots}$ formed by the successive values of the random walk along the ray, is a real-valued random walk started at $S(root)=x$. In the sequel, we use the notation $\P_x$ to reflect the choice of $x$ as the initial value, so that $\P_0$ is used for the case where the random walk starts at zero.

For $v \in \R$, define $A_{\infty}(v)$ to be the event that there exists an infinite ray \mbox{$root=:x_0,x_1,\ldots$} in $\T$ such that $S(x_i) \geq v i$ for all $ i \geq 0$.
A dynamical interpretation of this event is as follows. Assume that the tree $\T$ depicts the genealogical structure of a population whose individuals are identified with the vertices, so that the population starts at generation zero with a single ancestor represented by the root. For each vertex $u \in \T$, $S(u)$ is interpreted at the location on the real line of the individual represented by $u$. Now assume that, in every generation $n \geq 0$, any individual whose location is strictly below $v n $ is killed, and removed from the population together with all its progeny. In this setting, the event  $A_{\infty}(v)$ corresponds to the fact that the population survives the killing process forever , i.e. that the $n-$th generation is non-empty for every $n \geq 0$.

It turns our that, under suitable regularity assumptions on the distribution of $\zeta$, there exists a critical value $v^*$ such that $\P_0(A_{\infty}(v))>0$ for all $v<v^*$, while $\P_0( A_{\infty}(v) )=0$ for all $v \geq v^*$ (see e.g. \cite{BigLubShwWei}). In this paper, we are interested in how fast $\P_0(A_{\infty}(v))$ goes to zero when $v<v^*$ goes to $v^*$.  To state a precise result, we first introduce some notations and assumptions on the distribution of $\zeta$. Our first and rather drastic assumption is that $\zeta$ has bounded support, or, in other words, that there exists a bounded interval $[\zeta_-, \zeta_+]$ such that 
\begin{equation} P( \zeta \in  [\zeta_-, \zeta_+] ) = 1.\end{equation}
Due to this assumption, the log-Laplace transform of $\zeta$,
$$\Lambda(t) := \log  E(\exp(t \zeta)),$$ is well-defined and finite for all $t \in \R$, 
and is $C^{\infty}$ as a function of the parameter $t$. 
Our next (and last) assumption on the distribution of $\zeta$ is that\footnote{The meaning of this assumption is discussed in more detail in the Appendix.} there exists a $t^* \in ]0,\infty[$ such that 
\begin{equation}\label{e:lambda-1}\Lambda(t^*)-t^* \Lambda'(t^*) = -\log(2).\end{equation} 
 With these assumptions, the critical value $v^*$ is given by
\begin{equation}\label{e:lambda-2}v^*=\Lambda'(t^*).\end{equation} 
and a recent result by N. Gantert, Y. Hu and Z. Shi \cite{GanHuShi} gives the following description of the asymptotic behavior of the survival probability as $v$ goes to $v^*$:
\begin{theorem}\label{t:le-theoreme}
 For $v<v^*$, one has the following asymptotic behavior as $v \to v^*$:
$$ \log \P_0(A_{\infty}(v)  ) \sim -  \pi  \sqrt{  \frac{\Lambda''(t^*) t^*} {2 (v^*-v)}}.$$ 
\end{theorem}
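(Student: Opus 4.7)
I would derive matching logarithmic upper and lower bounds by introducing an auxiliary upper barrier above the killing boundary and performing a moment analysis of the BRW restricted to the resulting strip. Set $\varepsilon:=v^*-v$ and $\sigma^2:=\Lambda''(t^*)$; the theorem amounts to showing that $\log\P_0(A_\infty(v)) = -\pi\sqrt{\sigma^2 t^*/(2\varepsilon)}\,(1+o(1))$ as $\varepsilon\to 0^+$.

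\emph{Step 1 (change of measure).} For a vertex $u$ at generation $|u|=n$, write $\tilde S(u):=S(u)-vn$, so that $A_\infty(v)$ becomes the event that $\tilde S(u_k)\ge 0$ for all $k\ge 0$ along some infinite ray. The many-to-one lemma combined with the Cram\'{e}r tilt at parameter $t^*$ applied to the step $\zeta-v$ yields the identity
\begin{equation*}
\E\!\left[\sum_{|u|=n} f\bigl(\tilde S(u_0),\ldots,\tilde S(u_n)\bigr)\right] = e^{n t^*\varepsilon}\, \E^{(t^*)}\!\left[e^{-t^* Y_n}\, f(Y_0,\ldots,Y_n)\right],
\end{equation*}
where, under $\P^{(t^*)}$, $(Y_k)_{k\ge 0}$ is a random walk starting at $0$ with i.i.d.\ steps of mean $\varepsilon$ and variance $\sigma^2$. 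The $2^n$ branching factor is cancelled against the tilt's normalisation thanks to \eqref{e:lambda-1} and \eqref{e:lambda-2}.

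\emph{Step 2 (upper bound).} Introduce an upper barrier $L$ slightly below $L_\varepsilon := \pi\sigma/\sqrt{2t^*\varepsilon}$ and let $N_L$ be the number of vertices $u$ with $\tilde S(u_k)\in[0,L]$ for $k<|u|$ and $\tilde S(u)>L$. On $\{N_L=0\}$ every surviving descendant stays forever in the strip $[0,L]$; but the tilted walk killed at $\{0,L\}$ has a Perron eigenvalue $e^{-\tilde\mu_L}$ with $\tilde\mu_L\sim \pi^2\sigma^2/(2L^2)+\varepsilon^2/(2\sigma^2)$ (from the Dirichlet eigenvalue of the limiting drift-diffusion), so the expected number of particles in the strip at generation $n$ is $\sim e^{-n(\tilde\mu_L-t^*\varepsilon)}$, which is subcritical once $L<L_\varepsilon$. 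The strip BRW is therefore a.s.\ eventually extinct, hence $\P_0(A_\infty(v))\le\E[N_L]$. Bounding $e^{-t^*Y_n}\le e^{-t^*L}$ on the crossing event and summing a geometric series yields
\begin{equation*}
\E[N_L] \le \frac{C\, e^{-t^*L}}{\tilde\mu_L-t^*\varepsilon},
\end{equation*}
and optimising over $L\uparrow L_\varepsilon$ produces $\log\P_0(A_\infty(v))\le -t^*L_\varepsilon + O(\log(1/\varepsilon)) = -\pi\sqrt{\sigma^2 t^*/(2\varepsilon)}(1+o(1))$.

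\emph{Step 3 (lower bound) and main obstacle.} For the matching lower bound I would apply Paley--Zygmund to $N_L$, estimating the second moment by a many-to-two decomposition of pairs of barrier-crossing particles along their most recent common ancestor, which reduces to a Green-function type sum for the tilted walk in $[0,L]$ controllable via the Dirichlet heat kernel. Once $\P(N_L\ge 1)\gtrsim \varepsilon^{O(1)}\E[N_L]$ is obtained, it remains to show that, conditionally on $N_L\ge 1$, the lineage of a barrier-crossing particle survives forever with probability at least $\varepsilon^{O(1)}$; this can be done by iterating the strip construction at a shifted height, or by comparison with a BRW above $L_\varepsilon/2$ that is supercritical in the diffusive regime. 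The genuinely delicate part is precisely this lower bound: it requires (i) Dirichlet spectral asymptotics for the discrete tilted walk in $[0,L_\varepsilon]$ that are uniform as $\varepsilon\to 0$ and quantitative enough to justify the diffusive approximation, and (ii) a robust way to convert barrier crossings into genuine infinite-survival events without losing more than a polynomial factor in $\varepsilon$. This is exactly where the authors' MMQ-inspired approach is expected to pay off, by recasting the survival probability as the boundary behaviour of a solution to the linearised FKPP equation on $[0,L_\varepsilon]$ with Dirichlet conditions, for which such spectral estimates are classical.
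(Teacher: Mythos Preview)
Your approach is a valid route to Theorem~\ref{t:le-theoreme}, but it is essentially the probabilistic strategy of Gantert--Hu--Shi \cite{GanHuShi} (many-to-one with the $t^*$-tilt, first/second moments, barrier arguments), which is precisely the method the present paper sets out to \emph{replace}. The paper's proof is entirely analytical and never touches moment calculations: it characterises $q_\infty$ as the unique non-trivial fixed point in $\H$ of the non-linear convolution operator $T(h)(x)=\psi(E(h(x+\zeta-v)))$, shows that any super-solution ($T(h)\le h$) dominates $q_\infty$ and any sub-solution ($T(h)\ge h$) is dominated by $q_\infty$, and then manufactures explicit super- and sub-solutions by truncating and scaling the solution $d(x)=e^{\alpha(\epsilon)x}\sin(\beta(\epsilon)x)$ of the linearised equation $c(x)=e^{-a(\epsilon)}E(c(x+\zeta-v))$. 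The bounds on $\log q_\infty(0)$ then come directly from evaluating these explicit functions near $x=1$, with no spectral analysis, no Green-function sums, and no second-moment control.

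What each approach buys: your moment method is more flexible (no bounded-support restriction, general branching), which is why \cite{GanHuShi} covers a wider class of models. The paper's convolution-equation method is shorter and more self-contained for both bounds, yields the improved $O(\log(v^*-v))$ error of Theorem~\ref{t:le-theoreme-mieux}, and makes the Brunet--Derrida linearisation heuristic rigorous on its own terms. One clarification: your final paragraph suggests the authors' MMQ-inspired method is a tool to salvage the lower bound inside your moment argument, but that is not how it is used here; the analytical method replaces the moment argument wholesale, for \emph{both} bounds. The delicate issues you flag for your Step~3 (uniform Dirichlet spectral asymptotics for the discrete tilted walk, and converting a barrier crossing into genuine infinite survival with only polynomial loss) are real and are exactly what makes \cite{GanHuShi} technically heavy; the paper sidesteps them by never entering the moment framework at all.
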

In fact, the theorem quoted above is less general than the one proved in \cite{GanHuShi}, since we have assumed deterministic binary branching and bounded random walk steps, whereas \cite{GanHuShi}  allows for a more general branching mechanism, and possibly unbounded steps provided that the Laplace transform is finite in a neighborhood of zero. In this paper, we shall prove  the following slightly  improved version of Theorem \ref{t:le-theoreme}.
\begin{theorem}\label{t:le-theoreme-mieux}
 For $v<v^*$, one has the following asymptotic behavior as $v \to v^*$:
\begin{equation}\label{e:enonce} \log \P_0(A_{\infty}(v)  ) = -  \pi  \sqrt{  \frac{\Lambda''(t^*) t^*} {2 (v^*-v)}} + O \left( \log(v^*-v)    \right).\end{equation} 
\end{theorem}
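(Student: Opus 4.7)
The strategy will be to apply the Esscher tilt at parameter $t^*$ and then carry out a spectral analysis on a strip of width $L\sim\varepsilon^{-1/2}$, where $\varepsilon:=v^*-v$. This discretises the Brunet-Derrida/Mueller-Mytnik-Quastel picture of stochastic fronts with a killing barrier: on the length scale $L$, the survival dynamics is governed by the principal eigenvalue of a linearised transition operator whose continuous limit is the Sturm-Liouville problem $-\tfrac{1}{2}\Lambda''(t^*)\partial_y^2-\varepsilon\,\partial_y$ on $[0,L]$ with Dirichlet boundary conditions. The sine eigenfunctions and the associated eigenvalue $\pi^2\Lambda''(t^*)/(2L^2)$ are what produce the $\pi$ appearing in the theorem.

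\smallskip

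Concretely, I will first introduce the tilt $d\tilde\P/d\P=e^{t^*\zeta-\Lambda(t^*)}$, under which the step distribution has mean $\Lambda'(t^*)=v^*$ and variance $\Lambda''(t^*)$. Using $\log 2+\Lambda(t^*)=t^*v^*$ (a restatement of (\ref{e:lambda-1})--(\ref{e:lambda-2})), the many-to-one formula rewrites as
\[
\E\!\Bigl[\#\{u:|u|=n,\,S(u_i)-vi\geq 0\ \forall i\leq n\}\Bigr]=e^{t^*\varepsilon n}\,\tilde\E\!\left[e^{-t^*Y_n}\,\mathbf{1}_{\{Y_i\geq 0\,\forall i\leq n\}}\right],
\]
where $Y_i:=\tilde S_i-vi$ has drift $\varepsilon$ and variance $\Lambda''(t^*)$ per step under $\tilde\P$. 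Next I will introduce an upper cutoff at level $L$ and analyse the Perron eigenvalue $\rho_L$ of the transition kernel on $[0,L]$: after the Girsanov-type substitution $f(y)=e^{\varepsilon y/\Lambda''(t^*)}g(y)$ that removes the drift, a diffusive expansion gives
\[
-\log\rho_L=-t^*\varepsilon+\frac{\pi^2\Lambda''(t^*)}{2L^2}+\frac{\varepsilon^2}{2\Lambda''(t^*)}+O(L^{-4}),
\]
and choosing $L^*$ so that this vanishes produces $L^*=\pi\sqrt{\Lambda''(t^*)/(2t^*\varepsilon)}(1+O(\varepsilon))$, so that $t^*L^*=\pi\sqrt{t^*\Lambda''(t^*)/(2\varepsilon)}$ matches the leading term of (\ref{e:enonce}).

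\smallskip

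The upper bound on $\P_0(A_\infty(v))$ will then follow by decomposing
\[
\P_0(A_\infty(v))\leq\P\bigl(\exists u:|u|=n,\,\text{alive},\,S(u)-vn\in[0,L]\bigr)+\P\bigl(\exists u:|u|=n,\,\text{alive},\,S(u)-vn>L\bigr).
\]
The first summand is bounded by the first-moment estimate on the strip, which at the optimal time $n\sim L^2/\Lambda''(t^*)\sim\varepsilon^{-1}$ decays like $e^{-t^*L^*}$, up to an eigenfunction-integral prefactor polynomial in $L^*$ that contributes only $O(\log\varepsilon)$. The second summand is controlled by Markov's inequality applied to the additive martingale $M_n:=\sum_{|u|=n}e^{t^*(S(u)-v^*n)}\mathbf{1}_{\text{alive}}$, using that $\E[M_n]\leq 1$. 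For the matching lower bound, I will take $L$ slightly below $L^*$ and run a truncated second-moment argument: Paley-Zygmund at time $n_0\sim\varepsilon^{-1}$ yields a positive probability of having a particle in the strip, and the Markov property then gives survival with comparable probability.

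\smallskip

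The main obstacle will be the second-moment bound: two live particles in the strip sharing a common ancestor at generation $k$ contribute a term involving $\tilde\E[\Phi(Y_k)^2\,\mathbf{1}_{Y_i\in[0,L]\,\forall i\leq k}]$, where $\Phi$ is the Perron eigenfunction, and these terms must be summed over $k$ and shown to be at most a constant multiple of the square of the first moment. Doing so requires combining the sinusoidal decay of $\Phi$ near the two barriers with ballot-type estimates for the spine and its sibling. Matching the constant $\pi$ also demands that the discrete spectral problem agree with its continuous Sturm-Liouville limit up to order $O(L^{-4})$, which is where the bounded-support assumption on $\zeta$ enters, via a direct spectral-perturbation argument in place of a strong approximation.
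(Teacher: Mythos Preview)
Your plan is viable, but it follows a route essentially opposite to the paper's. You propose the probabilistic programme of Gantert--Hu--Shi \cite{GanHuShi}: Esscher tilt at $t^*$, many-to-one, spectral/small-deviations analysis of the tilted walk confined to a strip of width $L\sim\varepsilon^{-1/2}$, first-moment upper bound, and a Paley--Zygmund second-moment lower bound. The paper's entire purpose is to give an \emph{alternative} to that argument, and it is purely analytical: it characterises $q_\infty$ as the unique non-trivial fixed point in $\mathcal{H}$ of the non-linear convolution operator $T(h)(x)=\psi(E\,h(x+\zeta-v))$, shows that any $h\in\mathcal{H}$ with $T(h)\le h$ and $h(0)>0$ (resp.\ $T(h)\ge h$) dominates (resp.\ is dominated by) $q_\infty$, and then manufactures explicit super- and sub-solutions by truncating the functions $A\,e^{\alpha(\varepsilon)x}\sin(\beta(\varepsilon)x)$ that solve the linearised equation $c(x)=e^{-a(\varepsilon)}E\,c(x+\zeta-v)$, where $\alpha+i\beta$ is a complex root of $\Lambda(t)-tv=a(\varepsilon)$ obtained by a holomorphic implicit-function argument near $t^*$. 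No moment computation, no spine, no Paley--Zygmund.

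The common kernel is the sine profile: your Perron eigenfunction on the strip and the paper's $e^{\alpha x}\sin(\beta x)$ are the same object, and both arguments pivot on $\beta(\varepsilon)\sim\sqrt{2t^*\varepsilon/\Lambda''(t^*)}$, which is your eigenvalue balance $\rho_{L^*}=1$ in other clothing. What the paper gains is that the monotone comparison principle for $T$ replaces the second-moment machinery wholesale, so the $O(\log\varepsilon)$ error falls out directly from the explicit formula for the super/sub-solutions. What your approach gains is probabilistic transparency and a direct link to spine decompositions. Be aware, though, that \cite{GanHuShi} only obtained the leading asymptotic; getting the $O(\log\varepsilon)$ error from your outline means tracking every polynomial-in-$L^*$ prefactor through the ballot estimates, the discrete-to-continuous spectral approximation you invoke at order $O(L^{-4})$, and especially the sum over the splitting generation $k$ in the second moment---all feasible under the bounded-support hypothesis, but genuine work your sketch has not yet carried out.
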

The main novelty in the present paper lies in the method of proof, which is completely different from that developed in \cite{GanHuShi}.  Indeed, the proof given in \cite{GanHuShi} is based, among other things, on a first-second moment argument, using a change-of-measure technique combined with refined "small  deviations" estimates for random walk paths, exploiting some ideas developed in \cite{Kes} in the context of branching Brownian motion. On the other hand, our proof relies on the characterization of the survival probability of the branching random walk as the solution of a non-linear convolution equation.  This idea was in fact used by B. Derrida and D. Simon in \cite{DerSim1, DerSim2} to derive a heuristic justification of Theorems \ref{t:le-theoreme} and \ref{t:le-theoreme-mieux}, treating the corresponding equation by the non-rigorous methods developed by Brunet and Derrida to study stochastic front propagation models. The rigorous treatment given here was inspired by the work of C. Mueller, L. Mytnik and J. Quastel \cite{MueMytQua, MueMytQua2}, who deal with a continuous-time 
version of the non-linear convolution equation, as an intermediate step in their proof of the Brunet-Derrida velocity shift for the stochastic F-KPP equation. Both our argument and the one given in
\cite{MueMytQua, MueMytQua2} use the idea of comparing the solutions of the original non-linear equation to solutions of suitably adjusted linear approximations of it. An important difference, however, is that 
the equation appearing in  \cite{MueMytQua, MueMytQua2} is a second-order non-linear o.d.e., for which specific techniques (such as phase-plane analysis) can be applied, while such tools are not available in our discrete-time setting, so that we had to find a different way of implementing the comparison idea.

The rest of the paper is organized as follows. 
In Section \ref{s:equations}, we introduce the non-linear convolution equation characterizing the survival probability, 
and show how, given super- and sub-solutions of this equation, one can obtain upper and lower bounds on the survival probability. Section \ref{s:lineaire} is devoted to the study of a linearized version of the convolution equation, for which explicit solutions are available 
when $v$ is close to $v^*$. In Section \ref{s:building} we explain how the explicit solutions of the linearized equation derived in the previous section, can be used to build super- and sub-solutions to the original non-linear convolution equation. Finally, Section \ref{s:conclusion} puts together the arguments needed to prove Theorem \ref{t:le-theoreme-mieux}. Section \ref{s:BD} then discusses the 
connection of Theorem \ref{t:le-theoreme-mieux} and its proof with the Brunet-Derrida theory of stochastic fronts and related rigorous mathematical results. In the appendix, we discuss the meaning of the assumption that  there exists a $t^*$ such that (\ref{e:lambda-1}) holds, and the asymptotic behavior of the survival probability when this assumption is not met.

\section{Equations characterizing the survival probability}\label{s:equations}

In this section, we explain how to the survival probability can be characterized by non-linear convolution equations, and how  super- and sub-solutions to these equations provide control upon this probability.

\subsection{Statement of the results}

Throughout this section, $v$ denotes a real number such that 
$$v<v^*.$$
Let $\psi    \   :   \     [0,1] \to [0,1]$ be defined by 
$$\psi(s)=2s-s^2.$$
Let $\H$ denote the following set of maps:
$$\H  :=    \{      h    \        :        \        \R \to [0,1],       \           h  \mbox{ non-decreasing, 
 $h \equiv 0$ on $]-\infty, 0[$ } \},$$
  and, for all $h \in \H$, define $T(h) \in \H$ by\footnote{The fact that $T(h) \in \H$ is a straightforward consequence of the fact that $h \in \H$ and that $\psi$ is non-decreasing.} 
$$\left\{ \begin{array}{l}    T(h)(x) := \psi ( E(h(x+\zeta-v))),   \    x \geq 0  \\
          T(h)(x) := 0 ,   \        x<0        \end{array}  \right.$$

For all $n \geq 0$, let $A_n(v)$ denote the event that there exists a ray of length $n$ $root=:x_0,x_1,\ldots,x_n$ in $\T$ such that $S(x_i) \geq v i$ for all $ i \in \ig 0,n \id$.
Finally, for all $x \in \R$, let
$$q_n(x) :=  \P_x(A_n(v)),      \        q_{\infty}(x) := \P_x(A_{\infty}(v) ).$$
Note that, for obvious reasons\footnote{For instance by an immediate coupling argument.}, $q_n$ and $q_{\infty}$ belong to $\H$, and $q_0 = \un_{[0,+\infty)}$.
Given $h_1, h_2 \in \H$, we say that $h_1 \leq h_2$ when $h_1(x) \leq h_2(x)$ for all $x \in \R$.

The following proposition gives the non-linear convolution equation satisfied by $q_n$ and $q_{\infty}$, on which our analysis is based.
\begin{prop}\label{p:iter}
For all $n \geq 0$, $q_{n+1} = T(q_n)$, and $T(q_{\infty})=q_{\infty}$.
\end{prop}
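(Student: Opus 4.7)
The plan is to derive the finite-$n$ recursion by a one-step decomposition at the root, and then obtain the fixed-point identity for $q_\infty$ by passing to the limit $n\to\infty$.

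First I would dispose of the case $x<0$. Since $A_{n+1}(v)\subseteq\{S(root)\ge 0\}$, we have $q_{n+1}(x)=0$ for $x<0$, matching the definition of $T(q_n)(x)$ on this range.

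Next, for $x\ge 0$, I would condition on the two edges emanating from the root. Call $c_1,c_2$ the children of $root$, and let $\zeta_j:=\zeta(root,c_j)$, so that $S(c_j)=x+\zeta_j$. Denote by $\T_j$ the subtree of $\T$ rooted at $c_j$. A ray of length $n+1$ witnessing $A_{n+1}(v)$ must use exactly one of the two edges $(root,c_j)$ as its first edge; so $A_{n+1}(v)=B_1\cup B_2$, where $B_j$ is the event that the subtree $\T_j$ contains a ray $c_j=y_0,y_1,\ldots,y_n$ with $S(y_i)\ge v(i+1)$ for all $i\in\ig 0,n\id$. Writing $S(y_i)-v=[S(c_j)-v]+\sum_{e\in\ig c_j,y_i\id}\zeta(e)$ and using that the edge-weights in $\T_j$ are independent of $\zeta_j$ and distributed as in the original BRW, the branching property gives
\[ \P_x(B_j\mid\zeta_1,\zeta_2)=q_n(x+\zeta_j-v). \]
Moreover $\T_1$ and $\T_2$ are disjoint, so $B_1$ and $B_2$ are conditionally independent given $(\zeta_1,\zeta_2)$. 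By the inclusion-exclusion identity $\P(B_1\cup B_2)=\P(B_1)+\P(B_2)-\P(B_1)\P(B_2)$ applied conditionally, and then by taking expectation using that $\zeta_1,\zeta_2$ are i.i.d.\ copies of $\zeta$,
\[ q_{n+1}(x)=2\,\E[q_n(x+\zeta-v)]-\bigl(\E[q_n(x+\zeta-v)]\bigr)^2=\psi\bigl(\E[q_n(x+\zeta-v)]\bigr)=T(q_n)(x). \]
This proves the recursion.

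For the second assertion, I would use that the events $A_n(v)$ are decreasing in $n$ (any length-$(n+1)$ ray witnesses a length-$n$ ray by truncation), so $A_n(v)\downarrow A_\infty(v)$ and $q_n(x)\downarrow q_\infty(x)$ pointwise. Dominated (or monotone) convergence yields $\E[q_n(x+\zeta-v)]\to\E[q_\infty(x+\zeta-v)]$, and continuity of $\psi$ then gives $T(q_n)(x)\to T(q_\infty)(x)$; but $T(q_n)(x)=q_{n+1}(x)\to q_\infty(x)$, so $T(q_\infty)=q_\infty$.

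There is no real obstacle here; the only point requiring any care is the bookkeeping in the translation step, namely checking that asking $S(y_i)\ge v(i+1)$ in $\T_j$ is equivalent to asking $S(y_i)-v\ge vi$, so that the relevant probability is indeed $q_n$ evaluated at the shifted starting point $x+\zeta_j-v$, and that the two subtrees supply genuinely independent randomness given the first-generation displacements.
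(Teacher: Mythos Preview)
Your argument is correct and is precisely the first-step decomposition the paper has in mind; the paper's own proof consists of the single sentence ``Analysis of the first step performed by the walk,'' and your write-up simply fleshes out that sketch. The one point you leave implicit is that $A_n(v)\downarrow A_\infty(v)$ as sets (not just $q_n\downarrow$ some limit), which uses a K\"onig's-lemma/compactness argument in the locally finite tree; the paper treats this as equally obvious elsewhere, so no more is required here.
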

\begin{proof}
Analysis of the first step performed by the walk.
\end{proof}
A key property  is the fact that being a non-trivial fixed point of $T$ uniquely characterizes $q_{\infty}$ among the elements of $\H$, 
as stated in the following proposition.
\begin{prop}\label{p:charact-q}
Any $r \in \H$ such that $r \not\equiv 0$ and $T(r) = r$ is such that $r=q_{\infty}$.
\end{prop}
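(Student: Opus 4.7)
The plan is to prove $r = q_\infty$ via a probabilistic interpretation of the iterated fixed-point identity $r = T^n(r)$.

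As a preliminary step, I would identify $r(\infty) := \lim_{x \to \infty} r(x)$, which exists in $[0,1]$ by monotonicity. Boundedness of $\zeta$ gives $x + \zeta - v \to \infty$ almost surely as $x \to \infty$, so by dominated convergence $E[r(x + \zeta - v)] \to r(\infty)$; passing to the limit in the equation $r(x) = \psi(E[r(x + \zeta - v)])$ yields $r(\infty) = \psi(r(\infty)) = 2 r(\infty) - r(\infty)^2$, forcing $r(\infty) \in \{0,1\}$. Since $r$ is non-decreasing and $r \not\equiv 0$, this gives $r(\infty) = 1$.

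Next, conditionally on the positions $(S(u))_{u \in \T}$, introduce a conditionally independent family of Bernoulli variables $(\xi_u)_{u \in \T}$ with $\P(\xi_u = 1 \mid \cdot) = r(S(u) - v|u|)$, where $|u|$ denotes the generation of $u$ and $r$ is extended by $0$ on $(-\infty,0)$. A straightforward induction on $n$, exploiting $\psi(p) = 1 - (1-p)^2$ together with $T(r) = r$, shows that for every $n \geq 0$,
\[
1 - r(x) = E_x\Bigl[\prod_{u \in G_n^{\mathrm{surv}}} \bigl(1 - r(S(u) - vn)\bigr)\Bigr],
\]
where $G_n^{\mathrm{surv}}$ denotes the set of generation-$n$ particles whose ancestral path satisfies $S(x_i) \geq vi$ for every $0 \leq i \leq n$ (with the empty product interpreted as $1$). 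Splitting this expectation over $A_\infty(v)$ and $A_\infty(v)^c$: on $A_\infty(v)^c$, K\"onig's lemma applied to the binary-branching subtree of surviving particles shows that $G_n^{\mathrm{surv}} = \emptyset$ beyond a random depth, so the product equals $1$ for $n$ large, and dominated convergence gives a contribution tending to $\P_x(A_\infty(v)^c) = 1 - q_\infty(x)$. Rearranging, one obtains $q_\infty(x) - r(x) = \lim_n E_x[\un_{A_\infty(v)} \prod_{u} (1 - r(S(u)-vn))] \geq 0$, which already recovers $r \leq q_\infty$ for free, and the equality $r = q_\infty$ will follow as soon as this remaining contribution vanishes.

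The main obstacle is showing that on $A_\infty(v)$, the product tends to $0$ almost surely. Using $r(\infty) = 1$, for every $\epsilon > 0$ there exists $M_\epsilon$ with $1 - r(y) < \epsilon$ for $y \geq M_\epsilon$, so the product is at most $\epsilon$ as soon as at least one particle in $G_n^{\mathrm{surv}}$ satisfies $S(u) - vn \geq M_\epsilon$. The proof thus reduces to showing that $\max_{u \in G_n^{\mathrm{surv}}}(S(u) - vn) \to \infty$ almost surely on $A_\infty(v)$. This divergence is the substantive content of the argument: it follows either from the classical fact that the BRW maximum has asymptotic speed $v^* > v$, or, more self-contained, from the observation that on $A_\infty(v)$ the subtree of vertices possessing at least one infinite killed-surviving line of descent forms a supercritical branching process, so both its population and its spatial range must grow without bound.
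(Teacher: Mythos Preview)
Your route is genuinely different from the paper's. The paper gives a purely analytical argument: after noting $r\le q_\infty$ (from Proposition~\ref{p:convergence-sub}) and proving $r(0)>0$, it sets $\lambda_0=\sup\{\lambda\ge 0:\ r\ge\lambda q_\infty\}$ and shows, via the auxiliary monotone function $\chi(s)=\psi(\lambda_0 s)/(\lambda_0\psi(s))=(2-\lambda_0 s)/(2-s)$, that $\lambda_0<1$ would yield an element of this set strictly above $\lambda_0$. Your probabilistic approach, based on the product representation $1-r(x)=E_x\bigl[\prod_{u\in G_n^{\mathrm{surv}}}(1-r(S(u)-vn))\bigr]$ and K\"onig's lemma on $A_\infty(v)^c$, is correct up to that point (the induction is straightforward once one notes that factors with $S(u)-vn<0$ equal $1$), and the paper itself remarks after the proof that a probabilistic argument of this flavor is available.

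The gap lies in the last step, proving that $Y_n:=\max_{u\in G_n^{\mathrm{surv}}}(S(u)-vn)\to\infty$ a.s.\ on $A_\infty(v)$. Neither justification you give is adequate as written. For (a), the particle realizing the \emph{unrestricted} maximum $M_n$ has no reason to belong to $G_n^{\mathrm{surv}}$: its ancestral trajectory may have dipped below the barrier, so $M_n/n\to v^*$ tells you nothing directly about $Y_n$. For (b), the immortal subtree is \emph{not} a Galton--Watson process, because the conditional law of the number of immortal children depends on the parent's position; one can indeed show it stochastically dominates a supercritical Galton--Watson tree (each immortal particle has two immortal children with conditional probability uniformly $\ge\delta>0$, which requires $q_\infty(0)>0$), and this gives $|G_n^{\mathrm{surv}}|\to\infty$. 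But ``spatial range must grow without bound'' does not follow from population growth alone: nothing you have said prevents arbitrarily many surviving particles from sitting in a bounded interval of shifted positions.

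Two clean repairs: either (i) first prove $r(0)>0$ by the same interval-pushing argument the paper uses, so that the product is bounded by $(1-r(0))^{|G_n^{\mathrm{surv}}|}$, and combine with $|G_n^{\mathrm{surv}}|\to\infty$; or (ii) fix $v'\in(v,v^*)$ and observe that from any surviving particle one launches, with probability at least $q_\infty^{(v')}(0)>0$ uniformly, an infinite ray staying above the $v'$-barrier, which forces $Y_{n+k}\ge(v'-v)k$; L\'evy's zero--one law then gives $A_\infty(v)\subset\{Y_n\to\infty\}$ a.s.
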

Our strategy for estimating $q_{\infty}(x)$ is based on the following two comparison results.
\begin{prop}\label{p:convergence-super}
Assume that $h \in \H$ is such that $h(0)>0$ and $T(h) \leq h$.
Then $q \leq h$.  
\end{prop}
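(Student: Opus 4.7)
The strategy is to exploit Proposition \ref{p:charact-q}. Because $T$ is monotone on $\H$ (both $\psi$ and $g \mapsto E(g(\cdot+\zeta-v))$ are non-decreasing), the inequality $T(h) \leq h$ propagates under iteration to $T^{n+1}(h) \leq T^n(h)$ for every $n \geq 0$. The pointwise limit $h_\infty := \lim_n T^n(h)$ therefore exists and lies in $\H$, and dominated convergence inside $T$ gives $T(h_\infty) = h_\infty$. Once $h_\infty \not\equiv 0$ is established, Proposition \ref{p:charact-q} forces $h_\infty = q_\infty$; since the decreasing sequence $T^n(h)$ is sandwiched between $h$ and $h_\infty$, the desired bound $q_\infty \leq h$ follows at once. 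The whole plan thus reduces to showing that $h_\infty$ is not identically zero.

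For this, I would build a submartingale out of $h$ and the killed branching random walk. Introduce
$$Z_n := \prod_{u} \bigl(1 - h(S(u) - v n)\bigr),$$
where $u$ ranges over the vertices at generation $n$ satisfying $S(x_i) \geq v i$ along the ancestral ray from the root to $u$ (with the convention that the empty product equals $1$). A first-step analysis, using the binary branching, the independence of the two offspring increments, and the algebraic identity $1 - \psi(s) = (1 - s)^2$, gives the exact formula
$$E_x(Z_n) = 1 - T^n(h)(x)$$
for every $n \geq 0$ and $x \in \R$. The same computation, carried out conditionally on $\F_n$ and applied one step at a time, yields
$$E\bigl(Z_{n+1} \mid \F_n\bigr) = \prod_u \bigl(1 - T(h)(S(u) - v n)\bigr) \geq Z_n,$$
where the inequality is precisely the super-solution hypothesis $T(h) \leq h$. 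Thus $(Z_n)$ is a bounded submartingale and $E_x(Z_n) \uparrow 1 - h_\infty(x)$.

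To extract positivity of $h_\infty$, set $c := h(0) > 0$. Any vertex $u$ contributing to $Z_n$ satisfies $S(u) - v n \geq 0$, and monotonicity of $h$ then gives $h(S(u) - v n) \geq c$. Consequently $Z_n \leq 1 - c$ on the event $A_n(v)$ and $Z_n = 1$ on its complement, so
$$E_x(Z_n) \leq (1-c)\, q_n(x) + \bigl(1 - q_n(x)\bigr) = 1 - c\, q_n(x).$$
Passing to the limit $n \to \infty$ yields $h_\infty(x) \geq c\, q_\infty(x)$ for every $x$, and in particular $h_\infty(0) \geq c\, q_\infty(0) > 0$ since $v < v^*$. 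The main obstacle in the plan is to discover the right functional, the multiplicative product $\prod_u(1 - h(\cdot))$, which linearizes the binary branching; once it is in place, the factorization $1 - \psi(s) = (1 - s)^2$ does essentially all the algebraic work, and the remaining steps are monotone/dominated convergence together with the appeal to Proposition \ref{p:charact-q}.
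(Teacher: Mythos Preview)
Your argument is correct, and the overall architecture is exactly that of the paper: iterate $T$ to get a pointwise non-increasing sequence $T^n(h)$, pass to a limit $h_\infty\in\H$ that is a fixed point of $T$ by dominated convergence, and invoke Proposition~\ref{p:charact-q} once $h_\infty\not\equiv 0$ is known.

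The only substantive difference is how you verify $h_\infty\not\equiv 0$. The paper does this in one analytical stroke: from $h\ge h(0)\,\un_{[0,\infty)}$ and the elementary inequality $\psi(\lambda s)\ge\lambda\,\psi(s)$ for $\lambda,s\in[0,1]$, monotonicity of $T$ gives $T^n(h)\ge h(0)\,T^n(\un_{[0,\infty)})=h(0)\,q_n$, hence $h_\infty\ge h(0)\,q_\infty$. You reach the \emph{same} inequality $h_\infty\ge h(0)\,q_\infty$ by a probabilistic route, introducing the multiplicative functional $Z_n=\prod_u(1-h(S(u)-vn))$ and using the factorization $1-\psi(s)=(1-s)^2$ to obtain $E_x(Z_n)=1-T^n(h)(x)$, together with the pathwise bound $Z_n\le 1-h(0)$ on $A_n(v)$. (The submartingale property you derive is correct but not actually needed for the conclusion.) The paper in fact remarks after its proof that a probabilistic argument of this flavor is available but that an analytical proof was preferred for consistency with the rest of the paper; your approach is essentially that alternative. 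The trade-off is clear: the paper's super-homogeneity argument is shorter and requires no auxiliary construction, while yours gives a transparent branching-process interpretation of the inequality and would generalize naturally to other offspring laws via the identity $1-\psi(s)=f(1-s)$ for the generating function $f$.
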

\begin{prop}\label{p:convergence-sub}
Assume that $h \in \H$ is such that
$T(h) \geq h$.
Then $q \geq h$.  
\end{prop}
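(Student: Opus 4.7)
The plan is to combine the pointwise monotonicity of $T$ on $\H$ with Proposition \ref{p:iter} and the fact that the survival events are nested. First I would check that $T$ is monotone: if $h_1, h_2 \in \H$ with $h_1 \leq h_2$, then for $x \geq 0$
$$T(h_1)(x) = \psi\bigl(E(h_1(x+\zeta-v))\bigr) \leq \psi\bigl(E(h_2(x+\zeta-v))\bigr) = T(h_2)(x),$$
using that $\psi'(s) = 2-2s \geq 0$ on $[0,1]$ makes $\psi$ non-decreasing there, and that expectation preserves inequalities; for $x<0$ both sides are zero.

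Next I would exploit the two-sided sandwich that monotonicity delivers. From the hypothesis $T(h) \geq h$, iterating $T$ produces a pointwise non-decreasing sequence
$$h \leq T(h) \leq T^2(h) \leq \cdots \leq T^n(h) \leq \cdots.$$
Simultaneously, every $h \in \H$ satisfies $h \leq \mathbf{1}_{[0,+\infty)} = q_0$, since by definition $h$ is valued in $[0,1]$ and vanishes on $]-\infty,0[$. Applying $T^n$ to this inequality and invoking Proposition \ref{p:iter} gives
$$T^n(h) \leq T^n(q_0) = q_n.$$
Chaining these together yields $h(x) \leq T^n(h)(x) \leq q_n(x)$ for every $n \geq 0$ and every $x \in \R$.

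The last step is to pass to the limit in $n$. Since $A_{n+1}(v) \subseteq A_n(v)$ and $A_\infty(v) = \bigcap_{n \geq 0} A_n(v)$, the downward continuity of probability gives $q_n(x) \searrow q_\infty(x)$ for every $x \in \R$. Sending $n \to \infty$ in $h(x) \leq q_n(x)$ therefore produces $h \leq q_\infty$, which is the desired inequality $q \geq h$.

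There is essentially no obstacle here; the whole argument rests on the single observation that $T$ preserves pointwise order on $\H$. The only spot requiring a bit of attention is the universal bound $h \leq q_0$, which is not a hypothesis but is forced by the definition of the class $\H$. Note that, in contrast to the companion Proposition \ref{p:convergence-super}, no non-triviality assumption on $h$ is needed, and one never has to identify a limit of iterates as a fixed point of $T$.
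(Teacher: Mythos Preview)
Your proof is correct and follows essentially the same route as the paper's: monotonicity of $T$ (which the paper records as Proposition~\ref{p:monotonie}), iteration from $h \leq \un_{[0,+\infty[} = q_0$ to obtain $h \leq T^n(h) \leq q_n$, and passage to the limit. The only cosmetic difference is that the paper invokes dominated convergence for $q_n \to q_\infty$ whereas you use downward continuity of measures on the nested events $A_n(v)$; both are fine.
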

 
 \subsection{Proofs}

Let us first record the following simple but crucial monotonicity property of $T$.
\begin{prop}\label{p:monotonie}
If $h_1, h_2 \in \H$ satisfy  $h_1 \leq h_2$, then $T(h_1)\leq T(h_2)$.
\end{prop}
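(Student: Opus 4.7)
The plan is to split on the sign of $x$ and then peel off the two successive operations defining $T$. For $x<0$, the definition stipulates $T(h_1)(x)=T(h_2)(x)=0$, so the inequality is trivial. For $x\geq 0$, I would argue as follows: since $h_1\leq h_2$ pointwise on $\R$, applying the inequality at the random point $x+\zeta-v$ and taking expectations yields $E(h_1(x+\zeta-v))\leq E(h_2(x+\zeta-v))$ by monotonicity of the expectation.

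Next I would invoke monotonicity of $\psi$. Because $h_1,h_2\in\H$ take values in $[0,1]$, both expectations above belong to $[0,1]$. On this interval, the map $\psi(s)=2s-s^2=1-(1-s)^2$ satisfies $\psi'(s)=2(1-s)\geq 0$, so $\psi$ is non-decreasing on $[0,1]$. Applying $\psi$ to the previous inequality therefore preserves its direction, giving $T(h_1)(x)=\psi(E(h_1(x+\zeta-v)))\leq \psi(E(h_2(x+\zeta-v)))=T(h_2)(x)$, as desired.

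There is no real obstacle here: the proposition merely records that $T$ is built out of three monotone ingredients — the deterministic shift $y\mapsto y+\zeta-v$, the expectation, and the map $\psi$ restricted to $[0,1]$ — and the composition inherits monotonicity. The only subtlety worth flagging is that $\psi$ is \emph{not} globally non-decreasing on $\R$, so one must know a priori that the arguments fed into it lie in $[0,1]$; this is ensured by the very definition of the class $\H$. This is the same observation implicitly used in the footnote justifying $T(h)\in\H$.
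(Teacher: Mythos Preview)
Your proof is correct and is essentially the same approach as the paper's, which simply records the proof as ``Immediate.'' You have spelled out precisely the monotonicity ingredients (expectation and $\psi$ on $[0,1]$) that make it so.
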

\begin{proof}
Immediate.
\end{proof}

As a first useful consequence of Proposition \ref{p:monotonie}, we can prove Proposition \ref{p:convergence-sub}.
\begin{proof}[Proof of Proposition \ref{p:convergence-sub}]
Since $T(h) \geq h$, we can iteratively apply Proposition \ref{p:monotonie} to prove that 
for all $n \geq 0$, 
$T^{n+1}(h) \geq T^n(h),$ whence the inequality
$T^n(h) \geq h.$
On the other hand, since $h \in \H$, we have that $h \leq \un_{[0,+\infty[}$, whence 
the inequality $T^n(h) \leq T^n(\un_{[0,+\infty[})$. Since $T^n(\un_{[0,+\infty[}) = q_n$ by Proposition \ref{p:iter}, we deduce that 
$q_n \geq h$.
By dominated convergence, $\lim_{n \to +\infty} q_n(x) = q_{\infty}(x)$, so we finally deduce that $h \leq q_{\infty}$.
\end{proof}

We now collect some elementary lemmas that are used in the subsequent proofs.
\begin{lemma}\label{l:depasse-pas}
One has that $P(\zeta \leq v^*)<1$.
\end{lemma}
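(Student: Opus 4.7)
The plan is to proceed by contradiction: assume $P(\zeta \leq v^*) = 1$. The strategy is to combine this bound with the identity $\Lambda'(t^*) = v^*$ to force $\zeta$ to be almost surely equal to the constant $v^*$, which will then contradict the defining relation $\Lambda(t^*) - t^* \Lambda'(t^*) = -\log 2 \neq 0$.

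To implement this, I would introduce the exponentially tilted probability measure $\mu_{t^*}$ on $\R$ whose density with respect to the law of $\zeta$ is proportional to $e^{t^* z}$. By the standard tilting identity for the cumulant generating function, its mean is
$$E_{\mu_{t^*}}(\zeta) = \frac{E(\zeta e^{t^* \zeta})}{E(e^{t^* \zeta})} = \Lambda'(t^*) = v^*.$$
Since $\zeta$ is bounded (with $\zeta \in [\zeta_-, \zeta_+]$ a.s.), the density of $\mu_{t^*}$ is bounded above and bounded away from zero on $[\zeta_-, \zeta_+]$, so $\mu_{t^*}$ and the law of $\zeta$ share the same null sets. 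The hypothesis $\zeta \leq v^*$ a.s.\ therefore transfers to $\mu_{t^*}$, and combined with the displayed equality forces $\zeta = v^*$ $\mu_{t^*}$-a.s., and hence $\zeta = v^*$ a.s.\ under the original measure as well. But then $\Lambda(t) = t v^*$ for every $t \in \R$, yielding $\Lambda(t^*) - t^* \Lambda'(t^*) = 0$, in contradiction with $-\log 2$.

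I do not anticipate any real obstacle here: the argument is short and uses only the exponential tilting identity for $\Lambda'$ together with the boundedness assumption on $\zeta$ imposed throughout the paper. If I wanted a variant avoiding explicit tilting, I would instead observe that the existence of $t^*$ forces $\zeta$ to be non-constant, hence $\Lambda$ strictly convex and $\Lambda'$ strictly increasing, so $\Lambda'(t) > \Lambda'(t^*) = v^*$ for $t > t^*$, contradicting the bound $\Lambda'(t) \leq v^*$ that would follow from $\zeta \leq v^*$ a.s.
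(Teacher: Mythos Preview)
Your proposal is correct and follows essentially the same route as the paper: the paper writes $\Lambda'(t^*)=\frac{E(\zeta e^{t^*\zeta})}{E(e^{t^*\zeta})}=v^*$, i.e.\ $E((v^*-\zeta)e^{t^*\zeta})=0$, and then observes that under $P(\zeta\le v^*)=1$ the integrand is nonnegative, forcing $\zeta=v^*$ a.s.\ and contradicting $\Lambda(t^*)-t^*v^*=-\log 2$. Your tilted-measure phrasing is just a repackaging of this same computation, and your alternative via strict convexity of $\Lambda$ is a valid minor variant.
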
 
\begin{proof}
By assumption, there exists $t^*>0$ such that $\Lambda'(t^*)=v^*$.
But $\Lambda'(t^*) = \frac{ E(\zeta e^{t^* \zeta})   }{E(e^{t^* \zeta})}$, so that 
$E(\zeta e^{t^* \zeta}) =  E(v^* e^{t^* \zeta})$.
If moreover $P(\zeta \leq v^*)=1$, we deduce from the previous identity that $P(\zeta=v^*)=1$, 
so that $\Lambda(t^*)=t^*v^*$, which contradicts the assumption that 
$\Lambda(t^*)-t^* v^* = -\log(2)$.
\end{proof}

\begin{lemma}\label{l:q-positif}
One has that $q_{\infty}(0)>0$.
\end{lemma}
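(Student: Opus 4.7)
My plan is to combine Proposition \ref{p:convergence-sub} with the fixed-point equation $T(q_\infty) = q_\infty$ and Lemma \ref{l:depasse-pas}. The strategy splits into two steps: first I would show that $q_\infty \not\equiv 0$ by exhibiting a non-trivial sub-solution $h \in \H$ with $T(h) \geq h$, which via Proposition \ref{p:convergence-sub} yields $q_\infty \geq h$; then a short argument using the fixed-point equation and Lemma \ref{l:depasse-pas} upgrades this to the strict positivity $q_\infty(0) > 0$.

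For the upgrade step, assume for contradiction that $q_\infty(0) = 0$, and set $x^* := \inf\{y \geq 0 : q_\infty(y) > 0\}$, which is finite by the first step. The identity $q_\infty(x) = \psi(E[q_\infty(x + \zeta - v)])$, together with the fact that $\psi(s) = s(2-s)$ vanishes on $[0,1]$ only at $s = 0$, forces $q_\infty(x + \zeta - v) = 0$ almost surely for every $x \in [0, x^*)$ (or for $x = 0$ in the case $x^* = 0$). By the definition of $x^*$ and the monotonicity of $q_\infty$, this entails $\zeta \leq v + x^* - x$ almost surely; letting $x \uparrow x^*$ along a countable sequence yields $\zeta \leq v$ almost surely. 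Since $v < v^*$, this contradicts $\P(\zeta > v) \geq \P(\zeta > v^*) > 0$ provided by Lemma \ref{l:depasse-pas}.

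The main obstacle is the construction of the non-trivial sub-solution in the first step. Since $\psi(s) = 2s - s^2$, the natural linearization of $T$ at $0$ is $L h(x) := 2 E[h(x + \zeta - v)] \cdot \un_{x \geq 0}$, whose exponential trial functions $e^{t x}$ are eigenfunctions (ignoring the indicator) with eigenvalue $\mu(t) := \exp(\log 2 + \Lambda(t) - tv)$. The defining relation \eqref{e:lambda-1} combined with $v^* = \Lambda'(t^*)$ gives $\mu(t^*) = \exp(t^*(v^* - v)) > 1$ whenever $v < v^*$, so the linearization is strictly super-critical. I would exploit this margin to build an explicit sub-solution of the form $h(x) = c \cdot g(x) \un_{[0, \infty)}(x)$, where $g$ is a capped, rescaled exponential and $c > 0$ is small enough for the quadratic correction $-s^2$ in $\psi$ to be dominated by the super-critical gap. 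The delicate point is to verify $T(h) \geq h$ in both regimes: in the exponential regime near the origin, where the boundary condition $h \equiv 0$ on $(-\infty, 0)$ creates a deficit in $E[h(x + \zeta - v)]$ compared to the eigenfunction computation, and in the saturation regime where $h$ approaches its cap. The strict super-critical factor $e^{t^*(v^* - v)} - 1$ provides the slack needed to absorb both corrections, possibly after choosing a slightly sub-critical tilt $t$ close to $t^*$ for additional room.
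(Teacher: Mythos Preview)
Your upgrade step is correct and is essentially the argument the paper deploys inside the proof of Proposition~\ref{p:charact-q} (showing that any non-trivial fixed point $r$ has $r(0)>0$). The real gap is in Step~1.

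A capped exponential cannot serve as a sub-solution near the left boundary. With $h(x)=c\,\min(e^{tx},M)\,\un_{[0,\infty)}(x)$ one has $h(0)=c>0$, and for small $c$ the requirement $T(h)(0)\ge h(0)$ reads approximately $2E\bigl[e^{t(\zeta-v)}\un_{\zeta\ge v}\bigr]\ge 1$. At $t=t^*$ the \emph{full} expectation is $2E[e^{t^*(\zeta-v)}]=e^{t^*(v^*-v)}$, so the slack you appeal to is only $e^{t^*(v^*-v)}-1=O(v^*-v)$, whereas the boundary loss $2E[e^{t^*(\zeta-v)}\un_{\zeta<v}]$ is generically of order one; the inequality therefore fails whenever $v$ is close to $v^*$. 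Taking $t$ large does suppress this boundary term, but it then breaks the transition to the cap (near the cap one needs roughly $2E[\min(e^{t(\zeta-v)},1)]\ge 1$, which for large $t$ degenerates to $2P(\zeta\ge v)\ge 1$, not true in general). The missing idea is that one needs a solution of the \emph{linearized} equation which actually vanishes at $0$ and is $\le 0$ just below, so that truncation to zero on $(-\infty,0)$ \emph{helps} rather than hurts; real exponentials cannot do this, but the complex ones can. This is precisely what the paper exploits: it takes the oscillatory profile $c_-(x)=A(\epsilon)e^{\alpha(\epsilon)x}\sin(\beta(\epsilon)x)$ from Section~\ref{s:building}, for which $c_-(0)=0$ and the untruncated function is $\le 0$ on $[-\Delta,0]$. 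That construction is available only for small $v^*-v$; the extension to all $v<v^*$ then uses the monotonicity of $q_\infty(0)$ in $v$, a step your sketch also omits.
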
 
\begin{proof}
In Section \ref{s:building}, for $v^*-v$ small enough, we exhibit $c_- \in \H$ such that 
$T(c_-) \geq c_-$ and $c_-(x)>0$ for $x > 0$. We deduce (with Lemma \ref{l:depasse-pas}) that 
$T (c_-) (0)>0$. Now, letting $h:=T(c_-)$, Proposition \ref{p:monotonie} shows that $T(h) \geq h$. 
 Proposition \ref{p:convergence-sub} then yields that $q_{\infty}(0) \geq h(0)>0$. This conclusion is valid for small enough  $v^*-v$. 
 Since clearly\footnote{For instance by coupling. 
 } $q_{\infty}(0)$ is non-increasing with respect to $v$, the conclusion of the lemma is in fact valid for all $v<v^*$. 
\end{proof}

\begin{lemma}\label{l:encadrement}
There exists a constant $\kappa>0$ depending only on the distribution of $\zeta$, such that, for all $v<v^*$, 
$q_{\infty}(0) \geq \kappa q_{\infty}(1)$. 
\end{lemma}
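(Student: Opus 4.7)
The plan is, for every $v < v^{*}$, to build a single fixed-length event (not depending on $v$) under which one distinguished lineage stays above the killing line for a fixed number of generations and finishes with a ``cushion'' of at least $1$ above the barrier; then, thanks to the branching independence of the subtree rooted at the endpoint of that lineage and the monotonicity of $q_{\infty}$, I can transfer a factor $q_{\infty}(1)$ out.

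Concretely, I would first use Lemma \ref{l:depasse-pas} (which says $P(\zeta > v^{*}) > 0$) to pick some $\delta > 0$ with $p_{0} := P(\zeta \geq v^{*} + \delta) > 0$ and then fix an integer $n_{0} \geq 1/\delta$; the triple $(\delta, n_{0}, p_{0})$ depends only on the law of $\zeta$, not on $v$. For an arbitrary $v < v^{*}$, I would single out the leftmost ray $x_{0}, x_{1}, \dots$ of $\T$ (always taking the first child) and consider the event $B$ that each of the $n_{0}$ edges from $x_{0}$ to $x_{n_{0}}$ carries a weight $\geq v^{*} + \delta$. By i.i.d.\ of the $\zeta(e)$'s, $P(B) = p_{0}^{n_{0}}$.

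A routine check shows that on $B$ one has $S(x_{k}) \geq k(v^{*} + \delta) \geq kv^{*} \geq kv$ for every $1 \leq k \leq n_{0}$, so the partial ray is never killed, and $S(x_{n_{0}}) \geq n_{0}(v^{*}+\delta) \geq n_{0}v^{*} + 1 \geq n_{0}v + 1$, so the endpoint lies at least $1$ above the killing line. By the independence of the edge weights, conditionally on the first $n_{0}$ levels of $\T$ the subtree rooted at $x_{n_{0}}$ is a fresh independent BRW, and the probability that it carries an infinite ray with $S(y_{j}) \geq (n_{0}+j)v$ for all $j \geq 0$ equals $q_{\infty}(S(x_{n_{0}}) - n_{0}v)$, which on $B$ is bounded below by $q_{\infty}(1)$ by monotonicity of $q_{\infty}$. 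Concatenating this infinite ray with $(x_{0}, \dots, x_{n_{0}})$ produces an infinite ray of $\T$ witnessing $A_{\infty}(v)$, hence
$$q_{\infty}(0) \geq P(B)\, q_{\infty}(1) = p_{0}^{n_{0}}\, q_{\infty}(1),$$
and one may take $\kappa := p_{0}^{n_{0}}$.

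There is no real obstacle in this argument: everything is a matter of choosing the pair $(\delta, n_{0})$ once and for all from the distribution of $\zeta$ so that the same event $B$ produces the required cushion uniformly for every $v < v^{*}$. The single delicate point is precisely that such a choice exists, which is what Lemma \ref{l:depasse-pas} furnishes; the weaker statement that $q_{\infty}(0) > 0$ from Lemma \ref{l:q-positif} would not suffice.
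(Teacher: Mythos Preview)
Your argument is correct and is essentially the same as the paper's: both pick $\eta>0$ with $P(\zeta\ge v^*+\eta)>0$ via Lemma~\ref{l:depasse-pas}, iterate $n$ times with $n\eta\ge 1$, and set $\kappa=P(\zeta\ge v^*+\eta)^n$. The only cosmetic difference is that the paper phrases the iteration analytically, deducing $q_\infty(x)\ge E(q_\infty(x+\zeta-v))\ge P(\zeta\ge v^*+\eta)\,q_\infty(x+\eta)$ from the fixed-point equation $q_\infty=T(q_\infty)$ and $\psi(s)\ge s$, whereas you reconstruct the same inequality probabilistically by following one distinguished ray; the resulting constant and the logic are identical.
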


\begin{proof}
Using the fact that $\psi(s) \geq s$ for all $x \in [0,1]$, we have that, for all $x \in [0,+\infty[$, 
$q(x) \geq E(q_{\infty}(x+\zeta-v))$. From Lemma \ref{l:depasse-pas}, we can find $\eta>0$ such that $P(\zeta \geq v^*+\eta)>0$.
Using the fact that $q$ is non-decreasing, we obtain that $q_{\infty}(x) \geq P(\zeta \geq v^*+\eta) q_{\infty}(x+\eta)$. Iterating, we see that, for all $n \geq 0$,  
$q_{\infty}0) \geq P(\zeta \geq v^*+\eta) ^n q_{\infty}(n \eta)$. Choosing $n$ large enough so that $n \eta \geq 1$, we get the desired result.
\end{proof}

We now prove Proposition \ref{p:charact-q}.
\begin{proof}[Proof of Proposition \ref{p:charact-q}]
Let $r \in \H$ satisfy $T(r)=r$ and $r \not\equiv 0$.  
According to Proposition \ref{p:convergence-sub}, we already have that 
 $r \leq q_{\infty}$. Our next step is to show that $r(0)>0$. To this end, let $$D := \{  x \in [0,+\infty[ ;     \       r(x)>0   \}.$$
 Since we assume that $r \not\equiv 0$, $D$ is non-empty, and since, moreover, 
$r$ is non-decreasing, $D$ has to be an interval, unbounded to the right. 
Since $v<v^*$, we know from Lemma \ref{l:depasse-pas} that $P(\zeta \leq v ) < 1$, whence the existence of $\eta>0$ such that 
$P(\zeta-v>\eta)>0$. Let $x$ be such that $x+\eta$ belongs to $D$.
Then, $x+\zeta-v$ belongs to $D$ with positive probability, so that $E(r(x+\zeta - v))>0$.
If, moreover, $x \geq 0$, we have that: 
$$
r(x)=T(r)(x)=\psi(E(r(x+\zeta-v)))
,$$
so that $r(x)>0$ since $\psi(s)>0$ for all $0<s \leq 1$.
We have therefore proved that $$(D-\eta)\cap [0,+\infty[ \subset D.$$
Since $D$ is a subinterval of $[0,+\infty[$, unbounded to the right, this implies that $D=[0,+\infty[$,  whence $r(0)>0$.
Now let $$F:=\{ \lambda \geq 0;    \forall x \in [0,+\infty[,     \      r(x)   \geq \lambda q_{\infty}(x) \}.$$
Since $q_{\infty}(0)>0$ by Lemma \ref{l:q-positif} and $r \leq q_{\infty}$,  $F$ must have the form $[0,\lambda_0]$ for some $\lambda_0 \in [0,1]$, and, we need to prove that indeed $\lambda_0=1$ to finish our argument. We first show that $\lambda_0>0$. Since $r$ is non-decreasing, we have that, for all $x \in \R$,  $r \geq r(0)\un_{[0,+\infty[}$, 
whence $r \geq r(0)q_{\infty}$ since $\un_{[0,+\infty[} \geq q_{\infty}$. As a consequence, $\lambda_0 \geq r(0)$, and we have seen that $r(0)>0$. 
Now, using the fact that $T(r)=r$, Proposition \ref{p:monotonie}, and the definition of $\lambda_0$, 
we see that
$$
r = T(r) \geq T(\lambda_0 q_{\infty}).
$$
For $x \geq 0$, we also have that $q_{\infty}(x)\ge q_{\infty}(0)>0$, and   $E(q_{\infty}(x+\zeta-v))>0$. Since $\lambda_0>0$, we can write:
$$
r(x)  \geq  \lambda_0 q_{\infty}(x) \; \textstyle{\frac{T(\lambda_0 q)(x)}{\lambda_0 q_{\infty}(x)}} =  \lambda_0 q_{\infty}(x) \;   \textstyle{\frac{T(\lambda_0 q_{\infty})(x)}{\lambda_0 T(q_{\infty})(x)}},$$
whence the inequality
$$r(x)  \geq \lambda_0 q_{\infty}(x) \; \chi \big(E(q_{\infty}(x+\zeta-v))\big),
$$
where $\chi$ is the map defined, for $s \in ]0,1]$, by
$$
\chi(s):=\frac{\psi(\lambda_0 s)}{\lambda_0 \psi(s)}=\frac{2-\lambda_0 s}{2-s},
$$
with the extension $\chi(0):=1$.
Since $q_{\infty}$ is non-decreasing, this is also the case of the map $x \mapsto E(q_{\infty}(x+\zeta-v))$.
Moreover,  $\chi$ too is non-decreasing on $[0,1]$, so we get that:
$$
r(x) \geq \lambda_0 \chi(E(q_{\infty}(\zeta-v))) q_{\infty}(x).
$$
Therefore, $\lambda_0 \chi(E(q_{\infty}(\zeta-v)))$ is an element of the set $F$. 
If $\lambda_0<1$, the fact that $E(q_{\infty}(\zeta-v))>0$ and strict monotonicity of $\chi$ show that $\chi(E(q(\zeta-v)))> \chi(0)=1$. We would thus have the existence of an element in $F$ strictly greater than $\lambda_0$, a contradiction. We thus conclude that $\lambda_0$ equals $1$.
Therefore one must have $r \geq q_{\infty}$. 
\end{proof}

\begin{proof}[Proof of Proposition \ref{p:convergence-super}]
Since $T(h) \leq h$, we deduce from Proposition \ref{p:monotonie} that, 
for all $x \in \R$, the sequence $(T^n(h)(x))_{n \geq 0}$ is non-increasing.
We deduce the existence of a map $T^{\infty}(h)$ in $\H$ such that, for all $x \in \R$,  
$T^{\infty}(h)(x) = \lim_{n \to +\infty}T^n(h)(x)$, and it is easily checked by dominated convergence that  $T(T^{\infty}(h))(x) = T^{\infty}(h)(x)$ for all $x \in \R$, while
$T^{\infty}(h)(x) \leq h(x)$. It remains to check that $T^{\infty}(h) \not\equiv 0$ to obtain the result, since Proposition \ref{p:charact-q} will then prove that $T^{\infty}(h) \equiv q_{\infty}$.
 Using the fact that $h$ is non-decreasing, we have that, for all $x \geq 0$, 
$h(x) \geq h(0) \un_{[0,+\infty[}(x)$. Moreover, it is easily checked that, for all $\lambda,s \in [0,1]$, 
$\psi(\lambda s) \geq \lambda \psi(s)$. Using Proposition \ref{p:monotonie}, we thus obtain that
$T^n(h)(x) \geq h(0) T^n(\un_{[0,+\infty[})(x)$, whence $T^{\infty}(h)(x) \geq h(0) q_{\infty}(x)$ by letting $n \to +\infty$. We deduce that $T^{\infty}(h)(x) \not\equiv 0$ since we have assumed that 
$h(0)>0$.
\end{proof}

\begin{remark}
The proof of Proposition \ref{p:charact-q} given above relies solely on analytical arguments. It is in fact possible to prove a slightly different version of Proposition  \ref{p:charact-q} -- which is sufficient to establish Propositions \ref{p:convergence-super} and \ref{p:convergence-sub} -- using a probabilistic argument based on the interpretation of the operator $T$ in terms of branching random walks. We thought it preferable to give a purely analytical proof here, since our overall proof strategy  for Theorem \ref{t:le-theoreme-mieux} is at its core an analytical approach.
\end{remark}

\section{Solving a linearized  equation}\label{s:lineaire}

According to Proposition \ref{p:charact-q}, the survival probability $q_{\infty}$ can be characterized as the unique  non-trivial solution (in the space $\H$) of the 
non-linear convolution equation, valid for $x \geq 0$:
\begin{equation}\label{e:encore-convol} r(x) = \psi ( E(r(x+\zeta-v))).\end{equation}
Linearizing the above equation around the trivial solution $r \equiv 0$, using the fact that $\psi(s) = 2s + o(s)$ as $s \to 0$, yields the following linear convolution equation: 
\begin{equation}\label{e:et-encore-convol} r(x) = 2 E(r(x+\zeta-v)).\end{equation}
As explained in Section \ref{s:building} below, explicit solutions to a slightly generalized version of (\ref{e:et-encore-convol}) are precisely what we use to build super- and sub-solutions to the original non-linear convolution equation (\ref{e:encore-convol}). To be specific, the linear convolution equation we consider are of the form
\begin{equation}\label{e:lin-prelim}  c(x) =  e^{-a} E(c(x+\zeta-v)),    \     x \in \R, \end{equation}
where $c   \    :     \     \R \to \C$ is a measurable map, $v$ is close to $v^*$ 
and $e^{-a}$ is close to $2$. 
Looking for solutions of (\ref{e:lin-prelim}) of the form  
\begin{equation}\label{e:type-de-sol} c(x) = e^{\phi x},\end{equation} 
where $\phi \in \C$, we see that a necessary and sufficient condition for (\ref{e:type-de-sol}) to yield a solution is that 
\begin{equation}\label{e:on-veut}E\left(e^{\phi (\zeta - v)  }\right)  = e^{a}.\end{equation}
Due to our initial assumption on $\zeta$, we have that 
$$E\left(e^{t^* (\zeta - v^*)  }\right)  = 1/2,$$
so that one can hope to find solutions to (\ref{e:on-veut}) by performing a perturbative analysis. This is precisely what is done in Section \ref{ss:perturb}.
Then, in Section \ref{ss:analyse}, some of the properties of the corresponding solutions of (\ref{e:lin-prelim}) are studied.

\subsection{Existence of exponential solutions}\label{ss:perturb}

Consider the extension of the Laplace transform of $\zeta$ when the parameter $t \in \C$.
Since $\zeta$ has bounded support, $t \mapsto E(\exp(t \zeta))$ defines a holomorphic map from $\C$ to $\C$, and, since $E(\exp(t^* \zeta)) \notin \R_-$, we may extend the definition of 
$\Lambda$ to an open neighborhood $U$ of $t^*$ in $\C$, by setting $\Lambda(t) := \log  E(\exp(t \zeta))$ for $t \in U$, using the principal determination of the logarithm. We thus  obtain a holomorphic map on $U$.

\begin{prop}\label{p:implicite}
Let a be a holomorphic function defined on a neighborhood of zero, such that $a(0)=-\log(2)$ and $a'(0)=0$.
There exists $\epsilon_0>0$  and a map $\phi  \     :      \       [ 0, \epsilon_0 ] \to U$ such that, for all
$v \in [v^*-\epsilon_0,v^*]$, the following identity holds:
\begin{equation}\label{e:voulue-1}E\left(e^{\phi(v^*-v) (\zeta - v)  }\right)  = e^{a(v^*-v)},\end{equation}
and such that, as $\epsilon \to 0$,   
\begin{equation}\label{e:voulue-2}\phi(\epsilon) = t^* +  i   \sqrt{  \frac{2 t^*\epsilon}{\Lambda''(t^*)}    } + O(\epsilon).\end{equation}
\end{prop}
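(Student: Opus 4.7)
The plan is to find $\phi$ in a complex neighbourhood of $t^*$ as a perturbation when $\epsilon := v^* - v$ is small. Since $e^{a(\epsilon)}$ stays close to $1/2 \neq 0$, taking principal logarithms turns (\ref{e:voulue-1}) into the equivalent holomorphic equation
\begin{equation*}
G(\phi, \epsilon) := \Lambda(\phi) - \phi(v^* - \epsilon) - a(\epsilon) = 0,
\end{equation*}
where $\Lambda$ is the holomorphic extension defined earlier. By the defining relation $\Lambda(t^*) - t^* v^* = -\log 2$ together with $a(0) = -\log 2$, one has $G(t^*, 0) = 0$; however, $\partial_\phi G(t^*, 0) = \Lambda'(t^*) - v^* = 0$ as well, so the ordinary implicit function theorem does not apply. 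This simultaneous vanishing is the main obstacle, and I would overcome it by a square-root desingularization.

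First, I would Taylor expand around $(t^*, 0)$, using $\Lambda(t^* + \eta) = \Lambda(t^*) + v^*\eta + \tfrac{1}{2}\Lambda''(t^*)\eta^2 + O(\eta^3)$ and $a(\epsilon) = -\log 2 + O(\epsilon^2)$ (the latter exploiting $a'(0) = 0$) to obtain
\begin{equation*}
G(t^* + \eta, \epsilon) = \tfrac{1}{2}\Lambda''(t^*)\, \eta^2 + t^*\, \epsilon + O(\eta\epsilon + \eta^3 + \epsilon^2).
\end{equation*}
The dominant balance $\tfrac{1}{2}\Lambda''(t^*)\eta^2 + t^*\epsilon = 0$, combined with $\Lambda''(t^*) > 0$ (strict convexity of $\Lambda$, which holds because $\zeta$ is non-degenerate; if it were a.s.\ constant then $\Lambda(t) - t \Lambda'(t) \equiv 0$, contradicting (\ref{e:lambda-1})), predicts the purely imaginary root $\eta \sim i u_0 \sqrt{\epsilon}$ with $u_0 := \sqrt{2 t^* / \Lambda''(t^*)}$.

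To make this rigorous I would perform the change of variables $\epsilon = \delta^2$, $\eta = i\delta u$, and divide by $\delta^2$, arriving at a holomorphic map
\begin{equation*}
\widetilde{G}(u, \delta) = -\tfrac{1}{2}\Lambda''(t^*)\, u^2 + t^* + \delta\, R(u, \delta),
\end{equation*}
with $R$ holomorphic on a neighbourhood of $(u_0, 0)$. Now $\widetilde{G}(u_0, 0) = 0$ and $\partial_u \widetilde{G}(u_0, 0) = -\Lambda''(t^*)\, u_0 \neq 0$, so the holomorphic implicit function theorem produces $u(\delta)$ holomorphic near $0$ with $u(0) = u_0$. Setting $\phi(\epsilon) := t^* + i \sqrt{\epsilon}\, u(\sqrt{\epsilon})$ for $\epsilon \in [0, \epsilon_0]$ then yields a map satisfying (\ref{e:voulue-1}), and the expansion $u(\delta) = u_0 + O(\delta)$ translates into $\phi(\epsilon) = t^* + i u_0 \sqrt{\epsilon} + O(\epsilon)$, which is precisely (\ref{e:voulue-2}). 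Once the degeneracy is resolved by the $\sqrt{\epsilon}$ substitution, everything else is essentially bookkeeping.
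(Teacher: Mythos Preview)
Your proof is correct and follows essentially the same approach as the paper: both recognize that $\partial_\phi G(t^*,0)=0$ prevents a direct application of the implicit function theorem, and both resolve this degeneracy by a square-root change of variable (you via the substitution $\epsilon=\delta^2$, $\eta=i\delta u$ and division by $\delta^2$; the paper by writing the equation as $(t-t^*)^2 g(t) = (v-v^*)(t-b(v^*-v))$ and taking an explicit square root of each side) before applying the implicit function theorem at the resulting non-degenerate point. The only difference is packaging; the underlying desingularization is the same.
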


 \begin{proof}
 Given $v$ in the vicinity of $v^*$, we are looking for a $t \in U$ such that 
 \begin{equation}\label{e:resolution-1}E\left(e^{t (\zeta - v)  }\right)  = e^{a(v^*-v)}.\end{equation}
For $t$ close enough to $t^*$, and $v$ close enough to $v^*$, we can use the logarithm and observe that the equation
 \begin{equation}\label{e:resolution-2} \Lambda(t) - t v = a(v^*-v).\end{equation}
is sufficient for (\ref{e:resolution-1}) to hold.
Expanding $\Lambda$ for $t \in U$, we have that 
 $$\Lambda(t) = \Lambda(t^*) + (t-t^*) \Lambda'(t^*) + (t-t^*)^2 g(t),$$ 
where $g$ is holomorphic on $U$ and satisfies $g(t^*)=\Lambda''(t^*)/2$.
Plugging (\ref{e:lambda-1}) and (\ref{e:lambda-2}), we can rewrite the above expansion as
 $$\Lambda(t) =  -\log(2)+t v^*+ (t-t^*)^2 g(t).$$
On the other hand, using our assumptions on $a$, we can write
$$a(v^*-v)=-\log(2) + (v^*-v) b(v^*-v),$$
where $b$ is an holomorphic function in a neighborhood of zero such that $b(0)=0$.
Finally,  (\ref{e:resolution-2}) reads 
 \begin{equation}\label{e:resolution-3}  (t-t^*)^2 g(t) =  (v-v^*) (t- b(v^*-v)).\end{equation}
Observe that, since $g(t^*) = \Lambda''(t^*)/2 \notin \R_-$, we can define 
$\sqrt{g(t)}$ for $t$ close to $t^*$, using the principal determination of the logarithm and the 
definition $\sqrt{z} = \exp(\log z/2)$.
We can similarly define $\sqrt{t - b(z)}$ for $(t,z)$ close to $(t^*,0)$.
Now,  for $(t,u) \in \C \times \C$ in the vicinity of $(t^*,0)$, consider the equation . 
   \begin{equation}\label{e:resolution-4}  (t-t^*) \sqrt{g(t)}  = u  \sqrt{t - b(-u^2)}.\end{equation}
 Clearly, if (\ref{e:resolution-4}) holds with $u=i \sqrt{v^*-v}$ when $v^*$ and $v$ are
 real numbers such that $v<v^*$, then  (\ref{e:resolution-3}) holds.
 Now consider the map $\Xi$ defined in the neighborhood of $(t^*,0)$ in $\C \times \C$ by 
$$\Xi(t,u) :=  (t-t^*) \sqrt{g(t)}  - u  \sqrt{t - b(-u^2)}.$$
Observe that $\Xi(t^*,0) = 0$, and that the (holomorphic) derivative of $\Xi$ with respect to 
$t$ at $(t^*,0)$ is equal to $\sqrt{g(t^*)} = \sqrt{  \Lambda''(t^*)/2 } \neq 0$.
 Identifying $\C$ with $\R \times \R$, we can thus view $\Xi$ as a smooth map defined on an open set
 of $\R^4$, and apply the implicit function theorem to deduce the existence of a smooth 
 map $f$ defined on a neighborhood of $0$ in $\C$ such that $f(0)=t^*$ and such that, 
 for all $u$ near zero, 
\begin{equation} \label{e:implicite}  \Xi(f(u),u)=0.\end{equation}  We now set $\phi(\epsilon) = f(i \sqrt{\epsilon})$, which yields (\ref{e:voulue-1}).
 Then, one obtains $(\ref{e:voulue-2})$ by computing the derivative of $f$ at zero from 
 (\ref{e:implicite}) in the usual way.
  \end{proof}

 \subsection{Properties of the exponential solutions}\label{ss:analyse}
 
 Now let $a$, $\epsilon_0$ and $\phi$ be given as in Proposition~\ref{p:implicite}.
 Throughout the sequel, we use the notation 
  $$\epsilon = v^*-v.$$
  An immediate consequence of the proposition is that, for all $v \in [v^*-\epsilon_0,v^*]$, 
 the map defined on $\R$ by $x \mapsto e^{\phi(\epsilon)x}$, solves the equation 
 \begin{equation}\label{e:lin}  c(x) =  e^{-a(\epsilon)} E(c(x+\zeta-v)).\end{equation}
If $a(\epsilon) \in \R$ when $\epsilon \in \R$ (this will be the case in all the examples we consider below), then the map   $x \mapsto e^{\overline{\phi(\epsilon)}x}$ is also a solution of 
(\ref{e:lin}), where $\overline{z}$ denotes the conjugate complex number of $z$.
Let us set $\alpha(\epsilon):=\Re(\phi(\epsilon))$ and $\beta(\epsilon):=\Im(\phi(\epsilon))$. 
Thus, we obtain a solution of (\ref{e:lin}) if we set 
\begin{equation}\label{e:la-solution}d(x) := e^{\alpha(\epsilon)x} \sin ( \beta(\epsilon) x).\end{equation}

Consider $\epsilon$ small enough so that $\alpha(\epsilon)>0$ and $\beta(\epsilon) > 0$. 
Note that $$d(0)=  d(\pi/\beta(\epsilon))=  0,$$ 
and that one has
$$ \left\{ \begin{array}{l}
d \leq 0 \mbox{ on } [   -\pi/\beta(\epsilon),    0],\\
d \geq 0 \mbox{ on } [ 0  , \pi/\beta(\epsilon)],\\  
d \leq 0 \mbox{ on } [ \pi/\beta(\epsilon), 2\pi/\beta(\epsilon)].
\end{array}\right.$$

The derivative of $d$ is given by $$d'(x) =  \alpha(\epsilon) e^{\alpha (\epsilon) x} \sin(\beta (\epsilon) x) +  \beta(\epsilon) e^{\alpha (\epsilon) x } \cos(\beta (\epsilon)x).$$
One thus checks that $d$ attains a unique maximum on the interval 
$[0, \pi/\beta(\epsilon)]$, at a value $x=L(\epsilon)$ satisfying
\begin{equation} \label{e:prendre-la-tangente}\tan(\beta(\epsilon) L(\epsilon)) = -\beta(\epsilon)/\alpha(\epsilon),\end{equation}
and that $d$ is increasing on the interval $[0,L(\epsilon)]$.
As $\epsilon$ goes to $0$, we know from (\ref{e:voulue-2})
that 
\begin{equation}\label{e:alpha-et-beta}\alpha(\epsilon) = t^* + O(\epsilon) ,      \      \beta(\epsilon) =     \sqrt{  \frac{2 t^*\epsilon}{\Lambda''(t^*)}    }  +   O(\epsilon),\end{equation}
and we then deduce from (\ref{e:prendre-la-tangente}) that, as $\epsilon$ goes to $0$, 
\begin{equation}\label{e:limite-L-1}L(\epsilon)    =   \frac{\pi}{\beta(\epsilon)} - 1/t^* +o(1),\end{equation}
whence 
\begin{equation}\label{e:limite-L-2}L(\epsilon)   =  \pi  \sqrt{  \frac{\Lambda''(t^*)} {2 t^*\epsilon}   } + O(1).\end{equation}

\section{Building a super-solution and a sub-solution to the original equation}\label{s:building}

In this section, we explain how to transform the explicit solutions of the linear equation (\ref{e:lin-prelim}) 
obtained in the previous section, into super- and sub- solutions of the non-linear convolution equation of Section \ref{s:equations}. In the sequel, $\Delta$ denotes a real number (whose existence is guaranteed by the assumption that $\zeta$ has bounded support) such that, for all $v$ in the vicinity of $v^*$, 
 $$P(|\zeta - v| \leq \Delta) = 1.$$ 

\subsection{The super-solution}


Let us choose the function $a$ in Proposition~\ref{p:implicite} as the constant function 
$a(\epsilon):=-\log(2)$. Then consider the function $d(\cdot)$ defined in (\ref{e:la-solution}), and note that any function of the form 
$$c(x) := A(\epsilon) d(x + \Delta), \  x \in \R$$
  is a solution of (\ref{e:lin-prelim}). 
Now let $A(\epsilon)$ be implicitly defined by the requirement that 
$$c(L(\epsilon)-2\Delta)=1.$$
This last condition rewrites more explicitly as
\begin{equation}\label{e:def-A-1}A(\epsilon) e^{\alpha(\epsilon) (L(\epsilon)-\Delta)} \sin ( \beta(\epsilon) ( L(\epsilon) - \Delta) ) = 1. \end{equation}
From (\ref{e:limite-L-1}), we have that, as $\epsilon \to 0$,  
$$\sin ( \beta(\epsilon) (L(\epsilon)-\Delta)) \sim \beta(\epsilon) (1/t^*+\Delta).$$
Combining with (\ref{e:def-A-1}), (\ref{e:limite-L-2}) and (\ref{e:alpha-et-beta}), we deduce that, as $\epsilon \to 0$, 
$A(\epsilon)>0$ and
\begin{equation}\label{e:lim-A-1}\log A(\epsilon) = -  \pi  \sqrt{  \frac{\Lambda''(t^*) t^*} {2 \epsilon}} + O(\log \epsilon).\end{equation}
For notational convenience, we introduce $$C(\epsilon):=L(\epsilon)-2 \Delta.$$
The next proposition summarizes  the properties of $c$ that we shall use in the sequel.
\begin{prop}\label{p:des-proprietes-super}
 For small enough $\epsilon$, the following properties hold.
\begin{itemize}
\item[(i)] $c(x) \geq 0$ for all $x \in [-\Delta, 0]$; 
\item[(ii)] $0 \leq c(x) \leq 1$ for all $x \in [0,C(\epsilon)]$;
\item[(iii)] $c(x) \geq 1$ for all $x \in [C(\epsilon),C(\epsilon)+\Delta]$.
\item[(iv)] $c$ is non-decreasing on $[0,C(\epsilon)]$
\end{itemize}
Moreover, as $\epsilon$ goes to zero, 
 \begin{equation}\label{e:borne-sup}\log c(1) =  -  \pi  \sqrt{  \frac{\Lambda''(t^*) t^*} {2 \epsilon}}+O(\log \epsilon).\end{equation}
\end{prop}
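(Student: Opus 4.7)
The plan is to deduce (i)--(iv) directly from the geometric properties of $d$ recorded in Section~\ref{ss:analyse}, combined with the normalisation (\ref{e:def-A-1}) defining $A(\epsilon)$, exploiting that $L(\epsilon)$ grows like $\epsilon^{-1/2}$ while $\Delta$ is a fixed constant. The asymptotic (\ref{e:borne-sup}) will then fall out of a direct substitution into $c(x)=A(\epsilon)d(x+\Delta)$.

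First I would observe that, by (\ref{e:limite-L-1}), for $\epsilon$ small enough the chain
\begin{equation*}
0<\Delta<L(\epsilon)-\Delta<L(\epsilon)<L(\epsilon)+\Delta<\pi/\beta(\epsilon)
\end{equation*}
holds, since $\pi/\beta(\epsilon)-L(\epsilon)\to 1/t^*>0$ while $L(\epsilon)\to+\infty$. Consequently every argument of $d$ that will appear lies in $[0,\pi/\beta(\epsilon)]$, where $d\geq 0$, and those lying in $[0,L(\epsilon)]$ moreover benefit from the fact that $d$ is non-decreasing on that interval. In particular the left-hand side of (\ref{e:def-A-1}) is strictly positive, so $A(\epsilon)>0$.

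With this in hand, (i)--(iv) are translations of known properties of $d$ by the shift $x\mapsto x+\Delta$. For (i), $x\in[-\Delta,0]$ gives $x+\Delta\in[0,\Delta]\subset[0,L(\epsilon)]$, so $c(x)\geq 0$. For (iv) and the upper bound in (ii), $x\in[0,C(\epsilon)]$ gives $x+\Delta\in[\Delta,L(\epsilon)-\Delta]\subset[0,L(\epsilon)]$, whence $c$ is non-decreasing on $[0,C(\epsilon)]$ and is therefore bounded above by $c(C(\epsilon))=1$; the lower bound in (ii) is again $d\geq 0$. For (iii), $x\in[C(\epsilon),C(\epsilon)+\Delta]$ gives $x+\Delta\in[L(\epsilon)-\Delta,L(\epsilon)]$, on which monotonicity of $d$ together with $A(\epsilon)d(L(\epsilon)-\Delta)=1$ yields $c(x)\geq 1$. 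Finally, for the asymptotic (\ref{e:borne-sup}), I would expand
\begin{equation*}
\log c(1)=\log A(\epsilon)+\alpha(\epsilon)(1+\Delta)+\log\sin\!\bigl(\beta(\epsilon)(1+\Delta)\bigr).
\end{equation*}
By (\ref{e:alpha-et-beta}) the second term is $O(1)$; since $\beta(\epsilon)$ is of order $\sqrt\epsilon$, we have $\sin(\beta(\epsilon)(1+\Delta))\sim\beta(\epsilon)(1+\Delta)$, and the third term equals $\tfrac12\log\epsilon+O(1)=O(\log\epsilon)$. Combined with (\ref{e:lim-A-1}), this gives (\ref{e:borne-sup}).

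No step is genuinely difficult; the only point requiring care is the preliminary check that $L(\epsilon)+\Delta<\pi/\beta(\epsilon)$ for small $\epsilon$, which is precisely the reason one needs the sharp expansion (\ref{e:limite-L-1}) rather than the cruder (\ref{e:limite-L-2}). Beyond that the argument is bookkeeping: the entire singular behaviour of $c(1)$ is carried by $A(\epsilon)$, and the corrections coming from the exponential and sine factors combine into an $O(\log\epsilon)$ remainder of the same order as the error already present in (\ref{e:lim-A-1}).
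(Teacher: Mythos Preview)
Your argument is essentially the paper's (the paper just records that (i)--(iv) ``are rather direct consequences'' of Section~\ref{ss:analyse} and that (\ref{e:borne-sup}) follows from (\ref{e:lim-A-1}) and (\ref{e:alpha-et-beta}); you have simply spelled out those consequences). The computation for (\ref{e:borne-sup}) is clean and correct.

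There is one slip worth fixing. In your preliminary chain you include the inequality $L(\epsilon)+\Delta<\pi/\beta(\epsilon)$, and you single it out at the end as ``the only point requiring care.'' But by (\ref{e:limite-L-1}) one has $\pi/\beta(\epsilon)-L(\epsilon)\to 1/t^*$, and there is no assumption in the paper guaranteeing $\Delta<1/t^*$; the inequality can fail. Fortunately it is never used: as your own case analysis shows, the largest argument of $d$ that actually occurs is $L(\epsilon)$ (for (iii), $x+\Delta\in[L(\epsilon)-\Delta,L(\epsilon)]$), and $L(\epsilon)<\pi/\beta(\epsilon)$ is automatic from the definition of $L(\epsilon)$ as the location of the maximum. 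So just drop the $L(\epsilon)+\Delta$ term from the chain and delete the closing remark about it; the rest stands.
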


\begin{proof}
(i), (ii), (iii) and (iv) are rather direct consequences of the definition and of the analysis of Section \ref{ss:analyse}. As for (\ref{e:borne-sup}), it is readily derived from (\ref{e:lim-A-1}) and (\ref{e:alpha-et-beta}).
\end{proof}

We now define the map $c_+   \      :       \     \R \to  [0,1]$ by 
\begin{itemize}
\item $c_+(x) = 0$ for all $x < 0$; 
\item $c_+(x)=c(x)$ for all $x \in [0,L(\epsilon)-2 \Delta]$;
\item $c_+(x) = 1$ for all $x > L(\epsilon)-2 \Delta$.
\end{itemize}

Now let $\psi_+$ be defined on $[0,1]$ by $$\psi_+(s):=\min(2s, 1).$$
\begin{prop}\label{p:compare-sup}
For all $s \in [0,1]$, $\psi_+(s) \geq \psi(s)$.
\end{prop}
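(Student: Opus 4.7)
The plan is to verify that each of the two linear/constant pieces defining $\psi_+$ dominates $\psi$ on $[0,1]$, and then take the minimum. Concretely, I would check separately that $\psi(s) \le 2s$ and $\psi(s) \le 1$ for all $s \in [0,1]$; once both hold, $\psi(s) \le \min(2s,1) = \psi_+(s)$ follows immediately.

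For the first inequality, write $\psi(s) = 2s - s^2$ and note that $s^2 \ge 0$, so $\psi(s) = 2s - s^2 \le 2s$, with no restriction on $s$ needed. For the second, observe that $1 - \psi(s) = 1 - 2s + s^2 = (1-s)^2 \ge 0$, which gives $\psi(s) \le 1$ for every real $s$, in particular on $[0,1]$. Combining both bounds at each point $s \in [0,1]$ yields $\psi(s) \le \min(2s,1) = \psi_+(s)$.

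There is no real obstacle here: the statement is a one-line calculus comparison between the concave parabola $2s-s^2$ and its two tangent/chord majorants $2s$ (the tangent at $s=0$) and $1$ (the horizontal line through the maximum at $s=1$). The only mild care needed is to keep in mind the domain of $\psi_+$, which is $[0,1]$, so that both factors in $\min(2s,1)$ make sense and the comparison is meaningful throughout.
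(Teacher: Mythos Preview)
Your argument is correct and is exactly the elementary verification the paper has in mind when it records the proof as ``Immediate.'' There is nothing to add.
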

\begin{proof}
Immediate.
\end{proof}

\begin{prop}\label{p:super-solution}
For small enough $\epsilon$, one has that $c_+ \in \H$ and, for all $x \geq 0$, 
$$c_+(x)    \geq   \psi_+(E(c_+(x+\zeta-v))).$$
 \end{prop}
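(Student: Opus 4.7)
The plan is to verify the two assertions in sequence: first, that $c_+\in\H$, which is direct bookkeeping from Proposition \ref{p:des-proprietes-super}; second, the super-solution inequality, by a case analysis on $x$ that separates the exponential region $[0,C(\epsilon)]$ from the saturated region $(C(\epsilon),+\infty)$.

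For membership in $\H$, I would observe that $c_+\equiv 0$ on $(-\infty,0)$ by definition, that on $[0,C(\epsilon)]$ items (ii) and (iv) of Proposition \ref{p:des-proprietes-super} give values in $[0,1]$ together with monotonicity, and that the defining equation (\ref{e:def-A-1}) for $A(\epsilon)$ is precisely the normalization $c(C(\epsilon))=1$, which makes the junction with the constant branch $c_+\equiv 1$ on $(C(\epsilon),+\infty)$ consistent and preserves monotonicity globally.

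For the inequality, the case $x>C(\epsilon)$ is immediate since $c_+(x)=1$ while $\psi_+$ takes values in $[0,1]$. The heart of the argument is the case $x\in[0,C(\epsilon)]$, and rests on the pointwise comparison
\[
c_+(y)\leq c(y)\quad\text{for all } y\in[-\Delta,\,C(\epsilon)+\Delta].
\]
This is proved by splitting that interval into three pieces: on $[-\Delta,0)$ one has $c_+(y)=0\leq c(y)$ by item (i); on $[0,C(\epsilon)]$ one has equality; and on $(C(\epsilon),C(\epsilon)+\Delta]$ one has $c_+(y)=1\leq c(y)$ by item (iii). Since $|\zeta-v|\leq\Delta$ a.s., when $x\in[0,C(\epsilon)]$ the random point $x+\zeta-v$ lies a.s.\ inside $[-\Delta,C(\epsilon)+\Delta]$, hence
\[
E(c_+(x+\zeta-v))\leq E(c(x+\zeta-v))=\tfrac{1}{2}\,c(x),
\]
where the last equality is the linearized equation (\ref{e:lin}) (with $a(\epsilon)=-\log 2$, so $e^{-a(\epsilon)}=2$) applied to $c$, which is a real linear combination of the two complex exponential solutions constructed in Section \ref{ss:analyse} and hence itself solves (\ref{e:lin}). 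Since $\psi_+(s)\leq 2s$ on $[0,1]$, I conclude
\[
\psi_+\bigl(E(c_+(x+\zeta-v))\bigr)\leq 2E(c_+(x+\zeta-v))\leq c(x)=c_+(x).
\]

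There is no real obstacle here; the argument is essentially an exercise in recognizing that the additive shift by $\Delta$ in $c(x)=A(\epsilon)d(x+\Delta)$ provides a left buffer forcing $c\geq 0$ on $[-\Delta,0]$, while the subtraction of $2\Delta$ in the definition $C(\epsilon)=L(\epsilon)-2\Delta$ provides a right buffer forcing $c\geq 1$ on $[C(\epsilon),C(\epsilon)+\Delta]$. These two $\Delta$-buffers are exactly what the bounded support of $\zeta-v$ requires in order for the pointwise bound $c_+\leq c$ to hold throughout the support of $x+\zeta-v$, which in turn is what lets the linear equation for $c$ be used to control the nonlinear $\psi_+$-convolution of $c_+$.
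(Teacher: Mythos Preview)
Your proof is correct and follows essentially the same approach as the paper's own proof: first checking $c_+\in\H$ from Proposition~\ref{p:des-proprietes-super}, then splitting into the cases $x\in[0,C(\epsilon)]$ (using the pointwise bound $c_+\leq c$ on $[-\Delta,C(\epsilon)+\Delta]$ together with the linear equation $c(x)=2E(c(x+\zeta-v))$ and the inequality $\psi_+(s)\leq 2s$) and $x>C(\epsilon)$ (trivial since $c_+(x)=1$). Your version is in fact more explicit than the paper's, which simply asserts that $c_+\leq c$ on $[-\Delta,C(\epsilon)+\Delta]$ ``by construction'' without spelling out the three-piece verification.
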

\begin{proof}
The fact that $c_+ \in \H$ is guaranteed by the definition and properties (i) to (iv) of Proposition \ref{p:des-proprietes-super}.
Consider first the case $x \in [0,C(\epsilon)]$. By definition, one has that
$$c(x) = 2E(c(x+\zeta-v)).$$
Since, by construction, $c_+ \leq c$ on $[-\Delta, C(\epsilon)+\Delta]$, we deduce that 
$$c(x) \geq 2E(c_+(x+\zeta-v)).$$
Since $\psi_+(s) \leq 2s$ for all $s \in [0,1]$, we deduce that
$$c(x) \geq \psi_+(E(c_+(x+\zeta-v))),$$
whence, 
remembering that $c_+(x)=c(x)$, 
$$c_+(x) \geq \psi_+(E(c_+(x+\zeta-v))).$$
Now, for $x > C(\epsilon)$, we have that $c_+(x)=1$, so that $c_+(x) \geq \psi_+(s)$ for all $s \in [0,1]$.
In particular,  $$c_+(x) \geq \psi_+(E(c_+(x+\zeta-v))).$$
\end{proof}

\begin{coroll}\label{c:une-super-solution}
For small enough $\epsilon$, one has that $c_+ \geq T(c_+)$.
\end{coroll}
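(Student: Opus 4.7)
The plan is to chain together the two propositions that immediately precede the corollary and handle the trivial case $x<0$ separately. Since $T(c_+)$ is defined piecewise in $x$, I would split the verification of $c_+(x) \geq T(c_+)(x)$ into the two regimes $x<0$ and $x \geq 0$.

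For $x<0$, both sides vanish: by definition $T(c_+)(x)=0$, and by the definition of $c_+$ we also have $c_+(x)=0$, so the desired inequality holds as an equality. This case requires no work beyond invoking the definitions.

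For $x \geq 0$, the key is to string together the two bounds already established. First, Proposition~\ref{p:super-solution} gives
\[
c_+(x) \;\geq\; \psi_+\bigl(E(c_+(x+\zeta-v))\bigr).
\]
Then Proposition~\ref{p:compare-sup} replaces $\psi_+$ by $\psi$ at the cost of an inequality: since $E(c_+(x+\zeta-v))\in[0,1]$ (because $c_+$ takes values in $[0,1]$), we can apply $\psi_+ \geq \psi$ pointwise to obtain
\[
\psi_+\bigl(E(c_+(x+\zeta-v))\bigr) \;\geq\; \psi\bigl(E(c_+(x+\zeta-v))\bigr) \;=\; T(c_+)(x).
\]
Concatenating the two displays yields $c_+(x)\geq T(c_+)(x)$ for $x\geq 0$.

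There is really no obstacle here: the work was done in Proposition~\ref{p:super-solution} (building the comparison with the linear iterate $\psi_+$) and Proposition~\ref{p:compare-sup} (relating $\psi_+$ to $\psi$). The corollary is just the combination, valid for the range of $\epsilon$ for which Proposition~\ref{p:super-solution} applies.
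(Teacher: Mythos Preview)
Your proof is correct and follows exactly the same approach as the paper, which records the corollary as a direct consequence of Propositions~\ref{p:compare-sup} and~\ref{p:super-solution}. Your write-up simply makes the chaining explicit, including the trivial $x<0$ case.
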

\begin{proof}
Direct consequence of Propositions \ref{p:compare-sup} and \ref{p:super-solution}. 
\end{proof}

\subsection{The sub-solution}

In this section, we  choose the function $a$ in Proposition~\ref{p:implicite} as the function 
$a(\epsilon):=-\log(2)+\epsilon^2$. For notational convenience, we also introduce
$$\gamma(\epsilon):=e^{-a(\epsilon)},       \           h(\epsilon) := \gamma(\epsilon)(2-\gamma(\epsilon)).$$
As above, we use the function $d(\cdot)$ defined in (\ref{e:la-solution}), and let 
$$c(x) := A(\epsilon) d(x) = A(\epsilon) e^{\alpha(\epsilon)x} \sin ( \beta(\epsilon) x),$$
where $A(\epsilon)$ is implicitly defined by the requirement that $$c(L(\epsilon))=h(\epsilon).$$
The same argument as in the previous section then shows that, as $\epsilon \to 0$, $A(\epsilon)>0$ and
\begin{equation}\label{e:lim-A-2}\log A(\epsilon) = -  \pi  \sqrt{  \frac{\Lambda''(t^*) t^*} {2 \epsilon}} + O(\log \epsilon).\end{equation}
Here are the properties of $c$ that we shall use in the sequel.
\begin{prop}\label{p:des-proprietes-sub}
 For small enough $\epsilon$, the following properties hold.
\begin{itemize}
\item[(i)] $c(x) \leq 0$ for all $x \in [-\Delta, 0]$; 
\item[(ii)] $0 \leq c(x) \leq h(\epsilon)$ for all $x \in [0,L(\epsilon)]$;
\item[(iii)] $c(x) \leq h(\epsilon)$ for all $x \in [L(\epsilon),L(\epsilon)+\Delta]$;
\item[(iv)] $c$ is non-decreasing on $[0,L(\epsilon)]$.
\end{itemize}
Moreover, as $\epsilon$ goes to zero, 
 \begin{equation}\label{e:borne-inf}\log c(1) =  -  \pi  \sqrt{  \frac{\Lambda''(t^*) t^*} {2 \epsilon}} + O(\log \epsilon).\end{equation}
\end{prop}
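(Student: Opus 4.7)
The plan is to follow the same template as the proof of Proposition \ref{p:des-proprietes-super}, since only the choice of $a$, the normalization condition on $A(\epsilon)$, and the absence of a $\Delta$-shift differ. I would first note that with $a(\epsilon)=-\log 2+\epsilon^2$ we have $\gamma(\epsilon)=2e^{-\epsilon^2}\in(1,2)$ for small $\epsilon>0$, and $h(\epsilon)=\gamma(\epsilon)(2-\gamma(\epsilon))=4e^{-\epsilon^2}(1-e^{-\epsilon^2})\sim 4\epsilon^2$, so $h(\epsilon)>0$. Since $L(\epsilon)$ lies strictly inside the open interval $(0,\pi/\beta(\epsilon))$ on which $d$ is positive, $d(L(\epsilon))>0$, and the normalization $A(\epsilon)d(L(\epsilon))=h(\epsilon)$ therefore yields $A(\epsilon)>0$.

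Properties (i)--(iv) would then follow directly from the sign and monotonicity analysis of $d$ recorded in Section \ref{ss:analyse}, using that $\pi/\beta(\epsilon)\to\infty$ as $\epsilon\to 0$. Concretely, for (i) one uses $d\leq 0$ on $[-\pi/\beta(\epsilon),0]\supseteq [-\Delta,0]$ for small $\epsilon$; for (ii) and (iv) one uses that $d$ is non-negative and non-decreasing on $[0,L(\epsilon)]$, so $0\leq c\leq c(L(\epsilon))=h(\epsilon)$ there; for (iii) one uses that $d$ decreases from $d(L(\epsilon))$ down to $0$ on $[L(\epsilon),\pi/\beta(\epsilon)]$ and is non-positive on $[\pi/\beta(\epsilon),2\pi/\beta(\epsilon)]$, while $L(\epsilon)+\Delta\leq 2\pi/\beta(\epsilon)$ holds for small $\epsilon$ by (\ref{e:limite-L-1}), so $d\leq d(L(\epsilon))$ throughout $[L(\epsilon),L(\epsilon)+\Delta]$ and hence $c\leq h(\epsilon)$ there.

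For the asymptotic (\ref{e:borne-inf}) the derivation of $\log A(\epsilon)$ parallels that of (\ref{e:lim-A-1}): writing $\beta(\epsilon)L(\epsilon)=\pi-\beta(\epsilon)/t^*+o(\beta(\epsilon))$ gives $\sin(\beta(\epsilon)L(\epsilon))\sim \beta(\epsilon)/t^*$, so the defining identity $A(\epsilon)e^{\alpha(\epsilon)L(\epsilon)}\sin(\beta(\epsilon)L(\epsilon))=h(\epsilon)$ combined with (\ref{e:limite-L-2}) and (\ref{e:alpha-et-beta}) yields $\log A(\epsilon)=-\pi\sqrt{\Lambda''(t^*)t^*/(2\epsilon)}+O(\log\epsilon)$, exactly as in (\ref{e:lim-A-1}); the only change from the super-solution case is that $\log 1$ is replaced by $\log h(\epsilon)=O(\log\epsilon)$, which is absorbed in the error term. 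Finally, $\log c(1)=\log A(\epsilon)+\alpha(\epsilon)+\log\sin(\beta(\epsilon))=\log A(\epsilon)+O(\log\epsilon)$ gives (\ref{e:borne-inf}). I foresee no genuine obstacle; the only care required is to verify that the smallness of $h(\epsilon)$ (rather than its being a fixed constant as in the super-solution case) does not disturb the leading asymptotic, and indeed it does not, since its logarithmic contribution is absorbed in $O(\log\epsilon)$.
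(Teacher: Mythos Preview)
Your proposal is correct and follows exactly the same approach as the paper, which simply states that (i)--(iv) are direct consequences of the definition and the analysis of Section~\ref{ss:analyse}, and that (\ref{e:borne-inf}) is readily derived from (\ref{e:lim-A-2}) and (\ref{e:alpha-et-beta}). You have essentially filled in the details the paper omits, including the observation that $\log h(\epsilon)=O(\log\epsilon)$ is absorbed in the error term, which is precisely the point behind the paper's remark (just before the proposition) that ``the same argument as in the previous section'' applies.
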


\begin{proof}
(i), (ii), (iii) and (iv) are rather direct consequences of the definition and of the analysis of Section \ref{ss:analyse}. As for (\ref{e:borne-inf}), it is readily derived from (\ref{e:lim-A-2}) and (\ref{e:alpha-et-beta}).
\end{proof}

We define the map $c_-   \      :       \     \R \to  [0,1]$ by 
\begin{itemize}
\item $c_-(x) = 0$ for all $x < 0$; 
\item $c_-(x)=c(x)$ for all $x \in [0,L(\epsilon)]$;
\item $c_-(x) = h(\epsilon)$ for all $x > L(\epsilon)$.
\end{itemize}

Now define $\psi_-$  on $[0,1]$ by 
$$\psi_-(s):=\min(\gamma(\epsilon)s, h(\epsilon)).$$
\begin{prop}\label{p:compare-sub}
For small enough $\epsilon$, for all $s \in [0,1]$, $\psi_-(s) \leq \psi(s)$.
\end{prop}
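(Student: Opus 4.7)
The approach would be a direct verification based on three elementary observations about the definitions. First, since $a(\epsilon) = -\log 2 + \epsilon^2$, we have $\gamma(\epsilon) = 2 e^{-\epsilon^2}$, so that $\gamma(\epsilon) \in (1,2)$ for all small enough $\epsilon > 0$. Second, from $\psi(s) = s(2-s)$, the choice $h(\epsilon) = \gamma(\epsilon)(2-\gamma(\epsilon))$ can be rewritten as $h(\epsilon) = \psi(\gamma(\epsilon))$; using the symmetry $\psi(1+x) = \psi(1-x) = 1-x^2$ this also equals $\psi(2-\gamma(\epsilon))$. Third, $\psi$ is strictly increasing on $[0,1]$. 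These are the only facts the argument needs.

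Given these, the plan is to split $[0,1]$ at the break point $s_0 = h(\epsilon)/\gamma(\epsilon) = 2 - \gamma(\epsilon)$, which lies in $(0,1)$ by the first observation. On $[0, s_0]$ we have $\psi_-(s) = \gamma(\epsilon) s$, and the required inequality $\gamma(\epsilon) s \leq 2s - s^2$ rearranges as $s\bigl(s - (2-\gamma(\epsilon))\bigr) \leq 0$, which holds precisely on $[0,s_0]$. On $[s_0, 1]$ we have $\psi_-(s) = h(\epsilon) = \psi(2-\gamma(\epsilon)) = \psi(s_0)$, so the required inequality $\psi(s_0) \leq \psi(s)$ follows from monotonicity of $\psi$ on $[0,1]$.

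There is no real obstacle here: the proposition is essentially an algebraic identity dressed as an inequality, and the role of the slightly mysterious formula $h(\epsilon) = \gamma(\epsilon)(2-\gamma(\epsilon))$ is precisely to arrange that the two linear/constant pieces of $\psi_-$ meet the parabola $\psi$ tangentially-from-below at a point in $[0,1]$. It would be worth pointing out explicitly in the write-up that $h(\epsilon) = \psi(\gamma(\epsilon))$, since this makes the choice of $h(\epsilon)$ in the definition of $\psi_-$ transparent and will likely also be used implicitly in the sub-solution comparison that follows this proposition.
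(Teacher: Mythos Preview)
Your argument is correct and is precisely the elementary verification the paper has in mind: the authors' own proof reads simply ``Immediate.'' Your splitting at $s_0 = 2-\gamma(\epsilon)$ and the observation that $h(\epsilon)=\psi(2-\gamma(\epsilon))$ are exactly the content behind that one word.
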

\begin{proof}
Immediate.
\end{proof}

\begin{prop}\label{p:sub-solution}
For small enough $\epsilon$, one has $c_- \in \H$ and, for all $x \geq 0$,  
$$c_-(x)    \leq   \psi_-(E(c_-(x+\zeta-v))).$$
 \end{prop}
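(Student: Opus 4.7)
The plan is to verify the three ingredients of the statement in turn. Membership $c_-\in\H$ is immediate from the definition: $c_-\equiv 0$ on $(-\infty,0)$, it is non-decreasing on $[0,L(\epsilon)]$ by (iv) with $c_-(L(\epsilon))=h(\epsilon)$ matching the constant continuation on $[L(\epsilon),\infty)$, and its values lie in $[0,1]$ because $h(\epsilon)=\gamma(\epsilon)(2-\gamma(\epsilon))\in(0,1]$ as soon as $\gamma(\epsilon)=2e^{-\epsilon^2}$ lies in $(1,2)$, which holds for all sufficiently small $\epsilon$.

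The decisive step is a pointwise bound $c\le c_-$ on the window $[-\Delta,L(\epsilon)+\Delta]$, which reads off directly from (i)-(iii): on $[-\Delta,0]$, $c\le 0=c_-$ by (i); on $[0,L(\epsilon)]$, $c=c_-$ by construction; on $(L(\epsilon),L(\epsilon)+\Delta]$, $c\le h(\epsilon)=c_-$ by (iii). Since $|\zeta-v|\le\Delta$ almost surely, for any $x\in[0,L(\epsilon)]$ the argument $x+\zeta-v$ lies in this window, whence $E(c(x+\zeta-v))\le E(c_-(x+\zeta-v))$. Combining this with the linear equation $c(x)=\gamma(\epsilon)E(c(x+\zeta-v))$ satisfied by $c$ as a real multiple of the imaginary part of $x\mapsto e^{\phi(\epsilon)x}$, and with the ceiling $c(x)\le h(\epsilon)$ from (ii), yields
\[
c_-(x)=c(x)\le\min\bigl(\gamma(\epsilon)E(c_-(x+\zeta-v)),\,h(\epsilon)\bigr)=\psi_-\bigl(E(c_-(x+\zeta-v))\bigr),
\]
where $\psi_-$ is well-defined because $E(c_-(x+\zeta-v))\in[0,h(\epsilon)]\subset[0,1]$.

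It remains to handle $x>L(\epsilon)$, where $c_-(x)=h(\epsilon)$ while $\psi_-\le h(\epsilon)$ uniformly; thus the required inequality amounts to showing that the cap in $\psi_-$ is attained, i.e.\ $\gamma(\epsilon)E(c_-(x+\zeta-v))\ge h(\epsilon)$. Evaluating the inequality just obtained at $x=L(\epsilon)$ gives $h(\epsilon)\le\psi_-(E(c_-(L(\epsilon)+\zeta-v)))$, forcing precisely $\gamma(\epsilon)E(c_-(L(\epsilon)+\zeta-v))\ge h(\epsilon)$; monotonicity of $c_-$ then propagates this to all $x\ge L(\epsilon)$. The main obstacle is really the availability of property (iii): one needs $c$ to stay below $h(\epsilon)$ for an additional distance $\Delta$ past its maximum, which in turn is what permits the truncation at $h(\epsilon)$ to dominate $c$ on the full convolution window. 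The slight perturbation $a(\epsilon)=-\log 2+\epsilon^2$---pushing $\gamma(\epsilon)$ strictly below $2$ and $h(\epsilon)$ strictly below $1$---is precisely what makes the capped function $c_-$ a legitimate sub-solution rather than an artifact of the pure linearization.
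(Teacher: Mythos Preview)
Your proof is correct and follows essentially the same route as the paper: verify $c_-\in\H$ from properties (i)--(iv), establish $c\le c_-$ on the window $[-\Delta,L(\epsilon)+\Delta]$, use the linear equation for $c$ together with (ii) to handle $x\in[0,L(\epsilon)]$, and then use monotonicity of $c_-$ to treat $x>L(\epsilon)$. The only cosmetic difference is that for $x>L(\epsilon)$ the paper recomputes $\gamma(\epsilon)E(c_-(L(\epsilon)+\zeta-v))\ge \gamma(\epsilon)E(c(L(\epsilon)+\zeta-v))=c(L(\epsilon))=h(\epsilon)$ directly, whereas you obtain the same inequality by specializing the already-established case to $x=L(\epsilon)$; both are equivalent.
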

\begin{proof}
The fact that $c_- \in \H$ is a direct consequence of the definition and of properties (i) to (iv) of Proposition \ref{p:des-proprietes-sub}.
Consider first the case $x \in [0,L(\epsilon)]$. By construction, one has that
$$c(x) = \gamma(\epsilon)E(c(x+\zeta-v)).$$
Since, by construction, $c_- \geq c$ on $[-\Delta, L(\epsilon)+\Delta]$, we deduce that 
\begin{equation}\label{e:comparaison-x-2}c(x) \leq \gamma(\epsilon)E(c_-(x+\zeta-v)).\end{equation}
Since we have $c(x) \leq h(\epsilon)$ and $c_-(x)=c(x)$, we deduce that  
$$c_-(x) \leq  \psi_-(E(c_-(x+\zeta-v))).$$
Consider now  $x > L(\epsilon)$. Since $c_-$ is non-decreasing,  
so is $y \mapsto \gamma(\epsilon)E(c_-(y+\zeta-v))$.  We deduce that
$$\gamma(\epsilon)E(c_-(x+\zeta-v)) \geq \gamma(\epsilon)E(c_-(L(\epsilon)+\zeta-v)).$$ 
Using again the fact that $c_- \geq c$ on $[-\Delta, L(\epsilon)+\Delta]$, we have that 
$$\gamma(\epsilon)E(c_-(L(\epsilon)+\zeta-v)) \geq \gamma(\epsilon)E(c(L(\epsilon)+\zeta-v)) = c(L(\epsilon)) = h(\epsilon).$$
Since $c_- \leq h(\epsilon)$, we finally deduce that  
$$c_-(x) \leq \psi_-( E(c_-(x+\zeta-v))).$$
\end{proof}

\begin{coroll}\label{c:une-sub-solution}
For small enough $\epsilon$, one has that $c_- \leq T(c_-)$.
\end{coroll}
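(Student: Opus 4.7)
The plan is that this corollary follows immediately by combining the two previous results, Proposition \ref{p:compare-sub} and Proposition \ref{p:sub-solution}, and unpacking the definition of $T$. There is no real obstacle to overcome here; the work was already done in establishing those two propositions.

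More precisely, I would split on the sign of $x$. For $x<0$, both $c_-(x)$ and $T(c_-)(x)$ vanish by the definitions of $c_-$ and of $T$ restricted to the negative half-line, so the inequality holds trivially. For $x\ge 0$, Proposition \ref{p:sub-solution} gives
\[
c_-(x) \;\le\; \psi_-\bigl(E(c_-(x+\zeta-v))\bigr),
\]
and then Proposition \ref{p:compare-sub}, applied to the argument $s = E(c_-(x+\zeta-v)) \in [0,1]$, yields
\[
\psi_-\bigl(E(c_-(x+\zeta-v))\bigr) \;\le\; \psi\bigl(E(c_-(x+\zeta-v))\bigr) \;=\; T(c_-)(x).
\]
Chaining these two inequalities gives $c_-(x) \le T(c_-)(x)$, completing the argument.

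The only thing to be careful about is that $E(c_-(x+\zeta-v))$ does indeed lie in $[0,1]$, which is immediate since $c_- \in \H$ takes values in $[0,1]$ (this is part of Proposition \ref{p:sub-solution}), so Proposition \ref{p:compare-sub} applies at that point. Thus the corollary follows with essentially no additional computation.
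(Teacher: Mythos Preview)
Your argument is correct and is exactly the approach the paper intends: its proof simply reads ``Direct consequence of Propositions \ref{p:compare-sub} and \ref{p:sub-solution},'' and you have spelled out precisely how those two propositions combine.
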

\begin{proof}
Direct consequence of Propositions \ref{p:compare-sub} and \ref{p:sub-solution}. 
\end{proof}

\section{Proof of Theorem \ref{t:le-theoreme-mieux}}\label{s:conclusion}

We now put together the different pieces leading to the proof of Theorem \ref{t:le-theoreme-mieux}. For small enough $\epsilon$, Corollary \ref{c:une-super-solution} yields a super-solution of the non-linear convolution equation, i.e. a function $c_+ \in \H$ such that 
$c_+ \geq T(c_+)$. By Proposition \ref{p:convergence-super},  we automatically have that 
$q_{\infty} \leq c_+$. Now we have $q_{\infty}(0) \leq q_{\infty}(1) \leq c_+(1)$, so that the asymptotic behavior stated in (\ref{e:borne-sup}) yields that 
$$\log q_{\infty}(0) \leq -  \pi  \sqrt{  \frac{\Lambda''(t^*) t^*} {2 (v^*-v) }}+O(\log (v^*-v)).$$
Similarly, Corollary \ref{c:une-sub-solution} yields a sub-solution, i.e. a function $c_- \in \H$ such that 
$c_- \leq T(c_-)$, and, by Proposition \ref{p:convergence-sub} we have that 
$q_{\infty} \geq c_-$. Using Lemma \ref{l:encadrement}, we obtain that $q_{\infty}(0) \geq \kappa q_{\infty}(1) \geq \kappa c_-(1)$, so that the asymptotic behavior stated in (\ref{e:borne-inf}) yields that 
$$\log q_{\infty}(0) \geq -  \pi  \sqrt{  \frac{\Lambda''(t^*) t^*} {2(v^*-v)}}+O(\log (v^*-v)).$$
This concludes the proof of Theorem \ref{t:le-theoreme-mieux}.

\begin{remark}
One may wonder whether it is possible to extend our method of proof to a more general situation such as the one treated in \cite{GanHuShi}, i.e. a fairly general supercritical stochastic branching mechanism, and random walk steps whose Laplace transform is finite in a neighborhood of zero. We expect our proof to be robust to more general branching mechanisms, since the key properties of the resulting non-linear convolution equation (monotonicity, qualitative features of the linearized equation) should be preserved. On the other hand, it might not be easy to replace the bounded support assumption by a less stringent one, since one would have to control how large unlikely values of the random walk steps affect the validity of the construction of super- and sub-solutions described in Section \ref{s:building}.
\end{remark}

\begin{remark}
The error term in the statement of Theorem \ref{t:le-theoreme-mieux} is $O(log (v^*-v))$,  but we suspect that its correct order of magnitude might be $O(1)$, as suggested  by the (mathematically non-rigorous) results obtained by B. Derrida and D. Simon in  \cite{DerSim1, DerSim2}, and by analogous results obtained in the context of branching Brownian motion by J. Berestycki, N. Berestycki and J. Schweinsberg \cite{BerBerSch2} and E. Aidekon, S. Harris, and R. Pemantle \cite{AidHarPem}. 
Indeed, pushing the calculations just a little further with our method, it is not difficult to show that the error term is bounded above by a $O(1)$, i.e. that 
$$\log \P_0(A_{\infty}(v)) \leq -  \pi  \sqrt{  \frac{\Lambda''(t^*) t^*} {2 (v^*-v)}} + O(1).$$
However, an extra $\log(v^*-v)$ creeps into our lower bound, as a consequence of the sub-solution in Section \ref{s:building} being built with $a(\epsilon):=-\log(2)+\epsilon^2$ instead of $a(\epsilon):=2$, so that the best lower bound we can prove is $O(\log (v^*-v))$.
\end{remark}

\section{Connection with the Brunet-Derrida theory of stochastic fronts}\label{s:BD}

  Through a series of papers (see e.g. \cite{BruDer1, BruDer2, BruDer3, BruDerMueMun, BruDerMueMun2, BruDerMueMun3}), Brunet and Derrida, partly with Mueller and Munier,  developed a theory for 
the propagation of stochastic fronts described by perturbed F-KPP like equations. The methods and arguments used by Brunet and Derrida are not mathematically rigorous, but several of the corresponding results have now been given regular mathematical proofs (see e.g.  \cite{BenDep2, BenDep1, DumPopKap,ConDoe, MueMytQua, MueMytQua2, BerGou, DurRem, BerBerSch}). The purpose of this section is to explain how the present paper relates to these developments. 

We first explain how the survival probability of the branching random walk is related to travelling wave solutions of  F-KPP like equations. We then explain how such travelling waves arise in the context of the Brunet-Derrida theory of stochastic fronts, and describe the approach used by Brunet and Derrida to deal with these travelling waves. 
We finally discuss how Theorem \ref{t:le-theoreme-mieux}, its proof, and some of its consequencs, fit into the series of rigorous mathematical results establishing predictions from the Brunet-Derrida theory.

\subsection{Survival probability and travelling waves}

The F-KPP equation, named after Fisher \cite{Fis} and Kolmogorov, Petrovsky and Piscounov \cite{KolPetPis}, 
is one of the classical PDE models of front propagation, whose salient feature is to lead to travelling wave solutions.  
In its simplest form, the equation reads 
$$\frac{\partial u}{\partial t}  =  \frac{\partial^2 u}{\partial x^2}  + u(1-u),$$
where $u=u(x,t)$, $x \in \R$,  $t \geq 0$, and a travelling wave solution means a solution $u$ of the form $$u(x,t) = g(x-vt),$$ where $v \in \R$ is the wave speed, and $g(x),  \ x \in \R$  describes the wave shape.

In our context, it turns out that the survival probability $q_{\infty}$ (as a function of the starting point $x$ of the branching random walk) can be viewed as the shape of a travelling wave solution to a discrete analog of the F-KPP equation, with a special boundary condition.  Indeed, define, for all $n \geq 0$ and $x \in \R$,  
$$u_{n}(x) := q_{\infty}(-x+nv).$$ 
From Proposition \ref{p:iter}, one sees that the following equation holds for $u_n$:
for all $x \leq (n+1) v$, 
\begin{equation}\label{e:fkpp} u_{n+1}(x) = \psi(E(u_{n}(x-\zeta))).\end{equation}
Using the definition of $\psi$, and rearranging the formulas a little, the above equation rewrites\footnote{Written below the equation is the term-by-term analogy with the F-KPP equation.}
\begin{equation}\label{e:F-KPP-discret}\underbrace{u_{n+1}(x) -u_{n}(x)}_{\leftrightsquigarrow \frac{\partial u}{\partial t} } =  \underbrace{E (u_{n}(x-\zeta) ) - u_n(x)}_{\leftrightsquigarrow  \frac{\partial^2 u}{\partial x^2} }    +   \underbrace{ E(u_{n}(x-\zeta)) -  E(u_{n}(x-\zeta))^2}_{\leftrightsquigarrow u - u^2}.\end{equation} 
Remember that the above equation holds only for $x \leq (n+1)v$, while, due to the fact that $q_{\infty} \equiv 0$ on $]-\infty,0[$, we have
 that $u_{n+1}(x) = 0 $  for $x > (n+1)v$.
  It is now apparent that  $(n,x) \mapsto u_n(x)$ describes a discrete travelling wave with speed $v$, obeying a discrete analog of the F-KPP equation at the left of the front, and identically equal to zero at the right of the front, where at time $n$ the location of the front is $nv$.

\subsection{Brunet-Derrida theory of stochastic fronts}  
  
We begin by describing two distinct models considered by Brunet and Derrida. One is the stochastic F-KPP equation  
\begin{equation}\label{e:F-KPP-noise}\frac{\partial u}{\partial t}  =   \frac{\partial^2 u}{\partial x^2} + u(1-u)+\sqrt{\frac{u(1-u)}{N}} \dot{W},\end{equation} 
where $\dot{W}$ is a standard space-time white-noise,  and $N$ is large, with the initial condition $u(x,t=0) = \un_{]-\infty,0]}(x)$. 

The other is a particle system on the real line where a population of $N$ particles evolves according to repeated steps of branching and selection. Branching steps are identical to those of the branching random walk considered in this paper, each particle in a given generation being replaced in the next generation by two new particles whose locations are shifted using two independent random walk steps. Selection steps consist in keeping only the $N$ rightmost particles among the $2N$ obtained by branching from the population in the previous generation. 

It is possible to see these two models as describing the propagation of a front, and Brunet and Derrida found (see \cite{BruDer1, BruDer2, BruDer3}) that, for both models, the limiting velocity $v_N$ of the front has the following behavior as $N$ goes to infinity: 
\begin{equation}\label{e:BD-shift}v_{\infty} - v_N \sim C (\log N)^{-2},\end{equation}
where $v_{\infty}$ is the limiting value of $v_N$ as $N$ goes to infinity. Mathematical proofs of these results were then obtained in \cite{MueMytQua} for the stochastic F-KPP equation  case, and in \cite{BerGou} for the branching-selection particle system case.  
Let us mention that the description of stochastic fronts obtained by Brunet, Derrida, Mueller and Munier  goes far beyond (\ref{e:BD-shift}), which is, in some sense, a first-order result  (we refer to \cite{BruDerMueMun, BruDerMueMun2, BruDerMueMun3} for more details).

A very rough sketch of the argument used by Brunet and Derrida  to deduce (\ref{e:BD-shift}) is as follows. For solutions $u$ of the stochastic F-KPP equation (\ref{e:F-KPP-noise}),  at every time $t>0$, $x \mapsto u(x, t)$ continuously connects $1$ at $x=-\infty$ to $0$ at some random $x=X(t)$ defining the position of the front, right of which $u(\cdot, t)$ is identically zero.  Looking at the equation (\ref{e:F-KPP-noise}), one can see that stochastic effects due to the noise term counterbalance the $u(1-u)$ creation term when $u$ is of order $1/N$. 
To find the asymptotic speed of propagation of the front, one should thus look for travelling waves  obeying the F-KPP equation at the left of the front, taking values of order $1/N$ near the front, and which are identically equal to zero at the right of the front. To study these travelling waves, one replaces the F-KPP equation by a linear approximation, for which explicit solutions can be found -- these solutions should be approximately valid for the original equation, thanks to the fact that the values of $u$ are small near the front. One can then check that the speed of these travelling waves must satisfy (\ref{e:BD-shift}). The same line of argument is used to derive (\ref{e:BD-shift}) for the branching-selection particle system. In this case, the $N \to +\infty$ limit of the model can be viewed as a time-discrete version of the F-KPP equation similar to (\ref{e:F-KPP-discret}), and the actual system with $N$ particles is described by a perturbation of this equation. As in the case  of the stochastic F-KPP equation, one finds that the relevant scale for perturbations is $1/N$, but this time this is due to the fact that a population of $N$ particles has a resolution of $1/N$ for representing a probability mass.

Now, the key observation is that the dicrete travelling wave $(n,x) \mapsto u_n(x)$ deduced from $q_{\infty}$ in the previous section corresponds just to the object 
studied by Brunet and Derrida in their argument for (\ref{e:BD-shift}), namely travelling waves obeying an F-KPP like equation  at the left of the front, that are identically equal to zero at the right of the front. One slight difference is that Brunet and Derrida prescribe the order of magnitude of the values of the travelling wave near the front to be $1/N$, and ask for the corresponding speed, while we prescribe the speed to be $v=v^*-\epsilon$, and ask for the corresponding order of magnitude of the values near the front.

 Looking for a speed $v(N)$ such that, for large $N$,  
  $$\P_0(A_{\infty}(v(N))) \sim 1/N,$$    
  we see from Theorem \ref{t:le-theoreme} that one must have 
  \begin{equation}v^* - v(N) \sim  \textstyle{\frac{\pi^{2}}{2}} t^{*} \Lambda''(t^{*}) (\log N)^{-2},\end{equation}
  which corresponds precisely to the behavior (\ref{e:BD-shift}) of $v_N$ obtained by Brunet and Derrida in the branching-selection particle system case, and is also the key to the rigorous proof of (\ref{e:BD-shift}) given in \cite{BerGou}.

  The connection between the survival probability of the branching random walk and the travelling wave solutions of perturbed discrete F-KPP equations of the type investigated by Brunet and Derrida, was in fact used by Derrida and Simon in \cite{DerSim1, DerSim2} to derive Theorems \ref{t:le-theoreme} and \ref{t:le-theoreme-mieux}, in a mathematically non-rigorous way, from the Brunet and Derrida approach outlined above.

\subsection{Discussion}

Our strategy for proving Theorem \ref{t:le-theoreme-mieux} is based on the original argument of Brunet and Derrida sketched in the previous section, and turns out to be quite different from the probabilistic approach used by Gantert, Hu and Shi in \cite{GanHuShi}. Remember that the idea is  to replace the discrete F-KPP equation by a linear approximation of it for which explicit solutions can be computed. In our context, this corresponds to replacing the non-linear convolution equation (\ref{e:encore-convol}) by the linear equation (\ref{e:lin-prelim}).  An additional idea we use is that the monotonicity properties of the non-linear equation allow for a rigorous comparison between suitably adjusted solutions of the linear equation, and solutions of the original non-linear one, via the construction of super- and sub-solutions. We were inspired by the work of Mueller, Mytnik and Quastel \cite{MueMytQua, MueMytQua2}, where this comparison idea is used as an intermediate step in the rigorous proof of (\ref{e:BD-shift}) in the stochastic F-KPP equation case. In their context, the equation corresponding to  (\ref{e:encore-convol}) is the non-linear second-order ordinary differential equation $ -v u'  = u'' + u(1-u)$, and one can rely on specific techniques, such as phase-plane analysis, to implement the comparison idea. However, these tools are not available in our discrete-time setting, and we had to find a quite different way of achieving the comparison argument. Note that, as a direct by-product of the argument in  \cite{MueMytQua, MueMytQua2}, one can prove an analog to 
Theorem \ref{t:le-theoreme-mieux} for the survival probability of the branching Brownian motion killed below a linearly moving boundary.

The asymptotic behavior of the survival probability $q_{\infty}$ plays a key role in our proof of  (\ref{e:BD-shift}) in the branching-selection particle system case, which was 
given in \cite{BerGou}, and relied on the proof of Theorem \ref{t:le-theoreme} given by Gantert, Hu and Shi in \cite{GanHuShi}.  
 In this regard, an interesting feature of the proof of Theorem \ref{t:le-theoreme-mieux} presented here is that, combined to the comparison argument described in \cite{BerGou},
 it provides a proof of  (\ref{e:BD-shift})  in the branching-selection particle system case which is along the lines of the original argument by Brunet and Derrida.       
 What is (a little) more, the slight improvement from Theorem \ref{t:le-theoreme} to Theorem \ref{t:le-theoreme-mieux} concerning the order of magnitude of the error term allows us to refine  (\ref{e:BD-shift}) and thus achieve a result comparable with the one obtained in \cite{MueMytQua, MueMytQua2}. Indeed, under the assumptions of Theorem \ref{t:le-theoreme-mieux}, one has that, in the branching-selection particle system case,  
$$v^* - v_{N} - \textstyle{\frac{\pi^{2}}{2}} t^{*} \Lambda''(t^{*}) (\log N)^{-2} =   O\left(  \frac{\log \log N}{(\log N)^3}    \right).$$
 Note that the $ \frac{\log \log N}{(\log N)^3}$ term in the above equation corresponds to the actual order of magnitude expected from \cite{BruDerMueMun}, where more precise predictions are given.

%
%
%

\bibliographystyle{plain}
\bibliography{BD-survie}

\appendix
\section{The $t^*$ assumption}

Remember that the core results in this paper are derived under the following two assumptions on the random variable $\zeta$ describing the steps of the branching random walk:
\begin{enumerate}
\item the probability distribution of $\zeta$ has bounded support;
\item there exists $t^*>0$ such that $\Lambda(t^*)-t^* \Lambda'(t^*) = -\log(2)$, where $\Lambda$ is the log-Laplace transform of $\zeta$.
\end{enumerate}

The  goal of the present section is to discuss the role and meaning of the second assumption, which we call the $t^*$ assumption. For the sake of simplicity, we keep assuming throughout this section that  $\zeta$ is bounded.
We first try to provide some intuition about the $t^*$ assumption by giving some alternative characterizations.
We then discuss the asymptotic behavior of the survival probability when the $t^*$ assumption is not met.

\subsection{Alternative characterizations of the $t^*$ assumption}

The results contained in this section seem to be more or less folklore in the branching process literature. However, we failed at finding a reference providing both complete statements and self-contained proofs, so we decided to include the following short elementary account of these results.

We start by giving a general characterization of the critical speed $v^*$. 
Let $\zeta_+$ be the essential supremum of $\zeta$, and let $\zeta_-$ be its essential infimum.
Then let $\theta:\R\to\R$ be defined by 
$$
\theta(t)=\log \big(2E(\exp(t\zeta))\big)=\log (2)+\Lambda(t).
$$
From the bounded support assumption for $\zeta$, $\theta$ is well-defined and finite for all $t \in \R$, and is
 $C^{\infty}$ and convex as a function of the parameter $t$.
Then define
\begin{equation}\label{e:def-de-v-astre}
v^*=\inf\left\{\frac{\theta(t)}{t}, t>0\right\}.
\end{equation}



From results by J. M. Hammersley \cite{Hammersley-subadditive}, J. F. C. Kingman \cite{Kingman-branching-75}, and J. D. Biggins \cite{Biggins-76}, $v^*$ appears as the maximum limiting speed in the branching random walk. Indeed, letting $M_n$ denote the position of the rightmost particle of the branching random walk at time $n$, one has that 
\begin{theorem} With probability one,
$\frac{M_n}{n} \to v^*$.
\end{theorem}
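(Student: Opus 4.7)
The plan is to prove $M_n/n \to v^*$ almost surely by establishing the two inequalities $\limsup_n M_n/n \leq v^*$ and $\liminf_n M_n/n \geq v^*$ separately.

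For the upper bound, I would use a first-moment Chernoff argument. For any $a > v^*$, the definition of $v^*$ as the infimum of $\theta(t)/t$ provides $t>0$ with $\theta(t)-at<0$. Using the elementary many-to-one identity together with Markov's inequality applied after exponentiation by $t$,
$$P(M_n \geq an) \;\leq\; E\bigl[\#\{u : |u|=n,\, S(u) \geq an\}\bigr] \;=\; 2^n\, P(S_n \geq an) \;\leq\; 2^n e^{n\Lambda(t)} e^{-tan} \;=\; e^{n(\theta(t)-at)},$$
where $S_n$ denotes an ordinary random walk with i.i.d.\ steps distributed as $\zeta$. The bound is summable in $n$, so Borel-Cantelli yields $\limsup_n M_n/n \leq a$ almost surely, and letting $a \downarrow v^*$ gives the upper bound.

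For the lower bound, I would exploit the positivity of the survival probability established earlier in the paper. For $v$ close enough to $v^*$, Proposition~\ref{p:convergence-sub} combined with the sub-solution construction of Section~\ref{s:building} gives $P_0(A_\infty(v)) > 0$; this extends to all $v<v^*$ by the coupling observation that $v \mapsto P_0(A_\infty(v))$ is non-increasing (as already exploited in Lemma~\ref{l:q-positif}). On the event $A_\infty(v)$ there is an infinite ray along which $S(x_n) \geq vn$, so $M_n \geq vn$ for every $n$, and hence $\liminf_n M_n/n \geq v$. Thus $P(\liminf_n M_n/n \geq v) > 0$ for every $v < v^*$. To promote this to an almost-sure statement, I would use a short branching 0-1 argument: setting $L := \liminf_n M_n/n$ and decomposing the tree at its first level, the inequality $M_n \geq S(u_i) + M_{n-1}^{(i)}$ for each child $u_i$ of the root (combined with $S(u_i)/n \to 0$) gives $L \geq \max(L^{(0)}, L^{(1)})$, where $L^{(0)}, L^{(1)}$ are independent copies of $L$. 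Therefore $P(L\leq x) \leq P(L\leq x)^2$ for every $x$, which forces $P(L\leq x) \in \{0,1\}$, so $L$ is almost surely equal to some constant $\ell$. Combined with the previous step, $\ell \geq v$ for every $v<v^*$, hence $\ell \geq v^*$.

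Main obstacle: I do not expect a substantial obstacle, since both halves rely on tools either already developed in the paper (positivity of the survival probability via the sub-solution of Section~\ref{s:building}) or completely standard (Chernoff-Markov plus Borel-Cantelli, and the subtree 0-1 comparison). The only piece of routine verification is the liminf-of-max inequality used in the 0-1 argument, which is the elementary $\liminf_n \max(a_n,b_n) \geq \max(\liminf_n a_n, \liminf_n b_n)$.
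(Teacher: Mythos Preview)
The paper does not prove this theorem: it merely attributes the result to Hammersley, Kingman and Biggins and cites \cite{Hammersley-subadditive,Kingman-branching-75,Biggins-76}. Your argument, by contrast, is a genuine self-contained proof, and it is correct. The upper bound is the classical first-moment/Chernoff step; the lower bound recycles the sub-solution of Section~\ref{s:building} (via Lemma~\ref{l:q-positif}) to get $\P_0(A_\infty(v))>0$ for $v<v^*$, and then upgrades this to an almost-sure statement through the subtree comparison $L\ge\max(L^{(0)},L^{(1)})$, which indeed forces $P(L\le x)\in\{0,1\}$. This has the pleasant feature of making the statement internal to the paper's own toolbox rather than relying on the external references.

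One small caveat on scope: your lower bound uses the sub-solution of Section~\ref{s:building}, which is built under the $t^*$ assumption~(\ref{e:lambda-1}). The appendix, however, states the theorem in the more general bounded-support setting, including the cases where no such $t^*$ exists. In those cases Lemma~\ref{l:tstar-gw} gives $v^*=\zeta_+$ and $P(\zeta=\zeta_+)\ge 1/2$, so the upper bound $M_n\le n\zeta_+$ is trivial and the lower bound can be obtained directly from the Galton-Watson process of maximal steps (supercritical case) or from the elementary bound $\P_0(A_\infty(v))\ge d_\zeta(v^*-v)>0$ quoted later in the appendix (critical case), after which your $0$--$1$ argument applies unchanged. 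So the proof is complete under the paper's standing assumptions, and only a routine add-on is needed for the full generality intended in the appendix.
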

We refer to \cite{Biggins-branching-out} for a survey of this type of results.
Note that, as an immediate consequence,  $\zeta_- \le v^* \le \zeta_+$.

The following result shows that the existence of a $t^*$ such that  (\ref{e:lambda-1}) is satisfied (the $t^*$ assumption), is equivalent to the infimum defining $v^*$ in (\ref{e:def-de-v-astre}) being in fact a minimum. (Note that (\ref{e:tcondition}) below is exactly the same as (\ref{e:lambda-1}).)
\begin{lemma} \label{l:tmintcondition} 
For all $t^*>0$, 
\begin{equation} \label{e:tmin}
\frac{\theta(t^*)}{t^*}=v^*
\end{equation}
is equivalent to
\begin{equation} \label{e:tcondition}
\frac{\theta(t^*)}{t^*}=\theta'(t^*), \hbox{ i.e. } \Lambda(t^*)-t^* \Lambda'(t^*) = -\log(2).
\end{equation}
\end{lemma}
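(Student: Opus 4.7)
The plan is to exploit the fact that $\theta$ is smooth and convex, and to view the quotient $f(t):=\theta(t)/t$ as the slope of the secant line from the origin to $(t,\theta(t))$. The key geometric observation is that the condition $\theta(t^*)/t^*=\theta'(t^*)$ is exactly the statement that the tangent line to the graph of $\theta$ at $t^*$ passes through the origin; once this is noticed, the convexity of $\theta$ makes both implications transparent.

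For the direction $(\ref{e:tmin})\Rightarrow(\ref{e:tcondition})$, I would simply differentiate $f(t)=\theta(t)/t$. Since $f$ is $C^{\infty}$ on the open interval $(0,+\infty)$ and, by hypothesis, attains its infimum $v^*$ at the interior point $t^*$, Fermat's theorem yields $f'(t^*)=0$, i.e.\ $t^*\theta'(t^*)-\theta(t^*)=0$, which is (\ref{e:tcondition}).

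For the converse $(\ref{e:tcondition})\Rightarrow(\ref{e:tmin})$, suppose $\theta(t^*)/t^*=\theta'(t^*)$, and set $w:=\theta'(t^*)$. The tangent line to $\theta$ at $t^*$ is then $y=\theta(t^*)+w(t-t^*)=wt$ (using $\theta(t^*)=wt^*$), i.e.\ it passes through the origin. By convexity of $\theta$, the graph of $\theta$ lies above this tangent, so $\theta(t)\ge wt$ for all $t\in\R$. Dividing by $t>0$ gives $\theta(t)/t\ge w=\theta(t^*)/t^*$ for every $t>0$, so that $v^*=\inf_{t>0}\theta(t)/t\ge\theta(t^*)/t^*$. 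The reverse inequality $v^*\le\theta(t^*)/t^*$ is immediate from the definition of $v^*$, and (\ref{e:tmin}) follows.

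There is no real obstacle here: the argument is a one-variable calculus exercise once the tangent-line interpretation is in hand. The only point requiring a small amount of care is to notice that the hypothesis $t^*>0$ places us in the interior of the domain of $f$, so that the elementary first-order necessary condition for a minimum applies without any boundary issues; the bounded-support assumption on $\zeta$ (which makes $\theta$ finite and smooth on all of $\R$) ensures in particular that convexity can be used freely.
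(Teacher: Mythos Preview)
Your proof is correct and follows essentially the same approach as the paper's own proof: the forward direction uses the vanishing of the derivative of $t\mapsto\theta(t)/t$ at an interior minimum, and the converse uses the convexity inequality $\theta(t)\ge\theta(t^*)+\theta'(t^*)(t-t^*)$, which under (\ref{e:tcondition}) simplifies to $\theta(t)\ge\theta(t^*)t/t^*$. The only difference is cosmetic: you frame the converse via the tangent-line-through-the-origin picture, while the paper writes the same inequality directly.
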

\begin{proof} 
Let $t^*>0$. If (\ref{e:tmin}) holds, then the map defined by $t \mapsto \theta(t)/t$ reaches its minimum at $t^*$.
But its derivative at $t^*$ is :
$$
\frac{\theta'(t^*)t^*-\theta(t^*)}{{t^*}^2}.
$$
Therefore (\ref{e:tcondition}) holds.
Conversely, assume that (\ref{e:tcondition}) holds.
By convexity of $\theta$ and by (\ref{e:tcondition}) we get, for all $t>0$,
$$
\theta(t) \ge \theta'(t^*)(t-t^*)+\theta(t^*)=\theta(t^*)t/t^*.
$$
Therefore (\ref{e:tmin}) holds. \end{proof}

The following lemma provide some alternative probabilistic interpretations for the $t^*$ assumption.
\begin{lemma}\label{l:tstar-gw} The following conditions are equivalent.
\begin{enumerate}
\item There exists $t^*>0$ such that (\ref{e:tcondition}) holds.
\item $P(\zeta > v^*)>0$, i.e.\ $v^*<\zeta_+$.
\item $P(\zeta=\zeta_+)<1/2$.
\end{enumerate}
\end{lemma}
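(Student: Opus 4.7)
The plan is to introduce the function $f(t) := \theta(t)/t$ for $t>0$ and the auxiliary function
$$g(t) := E\bigl(e^{t(\zeta-\zeta_+)}\bigr) = 2 e^{-t\zeta_+} E(e^{t\zeta}) = 2 \exp(\theta(t)-t\zeta_+)/2 \cdot (\text{const}),$$
and observe the key identity $f(t) \geq \zeta_+$ is equivalent to $g(t) \geq 1/2$. I will first collect the analytic properties of $f$ and $g$, then deduce the two equivalences $(1)\Leftrightarrow(2)$ and $(2)\Leftrightarrow(3)$.

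The first step is to show that $f$ is continuous on $(0,\infty)$ with $\lim_{t\to 0^+} f(t) = +\infty$ (since $\theta(0)=\log 2>0$) and $\lim_{t\to \infty} f(t) = \zeta_+$. The latter limit follows from writing $f(t) = \zeta_+ + \log 2 /t + \log g(t)/t$ and controlling $\log g(t)/t$: for each $\delta>0$ we have $g(t) \geq P(\zeta\geq \zeta_+-\delta) e^{-t\delta}$, hence $\liminf_t \log g(t)/t \geq -\delta$, while $g(t)\leq 1$ gives the reverse bound. For $g$, I will note that the integrand $\omega \mapsto e^{t(\zeta(\omega)-\zeta_+)}$ is pointwise non-increasing in $t$, so $g$ is non-increasing, with $g(0^+)=1$ and, by monotone convergence, $\lim_{t\to\infty} g(t) = P(\zeta=\zeta_+)$.

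For $(2)\Leftrightarrow(3)$, I will prove the equivalent statement $v^*=\zeta_+ \Leftrightarrow P(\zeta=\zeta_+)\geq 1/2$. The already-noted inequality $v^*\leq \zeta_+$ shows that $v^*=\zeta_+$ iff $f(t)\geq \zeta_+$ for all $t>0$, which by the identity above is iff $g(t)\geq 1/2$ for all $t>0$. Since $g$ is non-increasing on $(0,\infty)$ with limit $P(\zeta=\zeta_+)$, this holds iff $P(\zeta=\zeta_+)\geq 1/2$.

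For $(1)\Leftrightarrow(2)$, by Lemma~\ref{l:tmintcondition} it suffices to show that the infimum defining $v^*$ is attained at some $t^*>0$ iff $v^*<\zeta_+$. If $v^*<\zeta_+$, then choosing $\eta>0$ with $v^*+\eta<\zeta_+$, the limits of $f$ at $0^+$ and $+\infty$ force $\{t>0:f(t)\leq v^*+\eta\}$ to be compactly contained in $(0,\infty)$, and continuity of $f$ then yields a minimizer $t^*$. Conversely, if the infimum is attained at some $t^*>0$, then $f(t^*)=v^*$; if we had $v^*=\zeta_+$ then by the previous paragraph $g\geq 1/2$ everywhere, and the identity $f(t^*)=\zeta_+$ forces $g(t^*)=1/2$, which since $g$ is non-increasing with $g(0^+)=1$ would require $g$ to stay at $1/2$ on $[t^*,\infty)$, contradicting $g(\infty)=P(\zeta=\zeta_+)\geq 1/2$ only in the degenerate case; the cleaner route is to note that $g$ is in fact strictly decreasing whenever $P(\zeta<\zeta_+)>0$, and the degenerate case $\zeta\equiv \zeta_+$ a.s.\ is excluded because it would make the $t^*$ equation read $\log 2 = 0$.

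The main obstacle is the bookkeeping around the degenerate case $\zeta\equiv\zeta_+$ and the strict vs.\ weak monotonicity of $g$; everything else is a routine application of monotone convergence and the intermediate value theorem applied to the smooth function $f$.
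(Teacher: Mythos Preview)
Your argument is correct and follows essentially the same route as the paper: introduce $f(t)=\theta(t)/t$, compute $f(0^+)=+\infty$ and $f(\infty)=\zeta_+$, obtain $(2)\Rightarrow(1)$ by compactness together with Lemma~\ref{l:tmintcondition}, and obtain $(2)\Leftrightarrow(3)$ via the monotone limit $E(e^{t(\zeta-\zeta_+)})\to P(\zeta=\zeta_+)$. The paper's decomposition $\theta(t)=t\zeta_+ +\log(A+B(t))$ with $A=2P(\zeta=\zeta_+)$ is just your identity $f(t)=\zeta_+ + t^{-1}\log(2g(t))$ in different notation.

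The one place where you diverge is $(1)\Rightarrow(2)$. The paper simply invokes Lemma~\ref{l:depasse-pas}, which gives $P(\zeta\le v^*)<1$ directly from the moment identity $E(\zeta e^{t^*\zeta})=v^*E(e^{t^*\zeta})$. Your first pass at this direction does not actually reach a contradiction (you correctly notice that $g\equiv 1/2$ on $[t^*,\infty)$ is compatible with $g(\infty)\geq 1/2$), and you then switch to the ``cleaner route''. That second route is fine: if $P(\zeta<\zeta_+)>0$ then $g'(t)=E((\zeta-\zeta_+)e^{t(\zeta-\zeta_+)})<0$, so $g$ is strictly decreasing; hence $f(t^*)=\zeta_+$ would force $f(t)<\zeta_+$ for $t>t^*$, contradicting $v^*=\zeta_+$; and the remaining degenerate case $\zeta\equiv\zeta_+$ makes \eqref{e:tcondition} read $0=-\log 2$. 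So your argument works, but the paper's citation of Lemma~\ref{l:depasse-pas} is shorter and avoids the case split. Also, tidy up the display defining $g$: the chain of equalities after $E(e^{t(\zeta-\zeta_+)})$ has a spurious factor of $2$ and a dangling ``$(\text{const})$''; only the definition $g(t)=E(e^{t(\zeta-\zeta_+)})$ and the identity $f(t)=\zeta_+ + t^{-1}\log 2 + t^{-1}\log g(t)$ are used, and both are correct.
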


\begin{proof}
By Lemma \ref{l:depasse-pas}, Condition 1 implies Condition 2. 
Let us check that Condition 2 implies condition 1.
We have:
$$
\lim_{t\to 0^+} \frac{\theta(t)}{t} = +\infty \hbox{ and } \lim_{t\to+\infty} \frac{\theta(t)}{t} = \zeta_+.
$$
Assume $v^*<\zeta_+$.
Then, $v^*$ can be defined as the infimum of $t \mapsto \theta(t)/t$ on a compact interval of $]0,+\infty[$.
Therefore, there exists $t^*>0$ such that (\ref{e:tmin}) holds.
By Lemma \ref{l:tmintcondition} we then get that Condition 1 holds. 

Let us check that Conditions 2 and 3 are equivalent.
This is a special case of Proposition A.2 in \cite{Jaffuel}.
Below we give a sligthly modified proof.
Let us write
$$
\theta(t)=t\zeta_++\log(A+B(t))
$$
where
$$
A=2P(\zeta=\zeta_+) \hbox{ and } B(t)=2E(\exp(t(\zeta-\zeta_+))1_{\zeta<\zeta_+}).
$$
Note that $A+B(t)$ converges toward $A$. 
If $P(\zeta=\zeta_+)<1/2$, we then get:
$$
\lim \log(A+B(t))<0
$$
(the limit might be $-\infty$).
Therefore there exists $t>0$ such that $\theta(t)/t<\zeta_+$ and we then have $v^*<\zeta_+$: Condition 2 holds.
If, on the contrary, $P(\zeta=\zeta_+) \ge 1/2$, then for all $t>0$ we have $A+B(t) \ge 1$ and then $\theta(t)/t \ge \zeta_+$.
Therefore $v^* \ge \zeta_+$: Condition 2 does not hold. \end{proof}

\subsection{Survival probability when the $t^*$ assumption is not fulfilled}

Our aim in this subsection is to show that, when the $t^*$ assumption is not fulfilled, the behavior of the survival probability differs from the one established in the core of the paper. 

Let us consider the process of children displaced by exactly $\zeta_+$ from their parent in the branching random walk. This is a Galton-Watson process. The number of children of 
a given parent follows a binomial distribution with parameters $2$ and $P(\zeta=\zeta_+)$.
The behavior of the surving probability $\P_0(A_{\infty}(v))$ as $v$ tends to $v^*$ from below 
turns out to depend on the behavior of this Galton-Watson process.
\begin{enumerate}
\item Subcritical case, i.e.\ $P(\zeta=\zeta_+)<1/2$. By Lemma \ref{l:tstar-gw} this is the case studied in the paper.
We have:
$$
\log \P_0(A_{\infty}(v)  ) \sim -C_{\zeta}(v^*-v)^{-1/2}
$$
where $C_{\zeta}$ is an explicit positive constant that depends on the distribution of $\zeta$.
\item Critical case, i.e.\ $P(\zeta=\zeta_+)=1/2$.
By Lemma \ref{l:tstar-gw} we have $v^*=\zeta_+$.
As in the subcritical case we have:
$$
\P_0(A_{\infty}(v)) \to \P_0(A_{\infty}(v^*)),
$$
since $\P_0(A_{\infty}(v^*))=0$ is also the survival probability of the critical Galton-Watson process.
However, the rate of convergence is not the same as in the subcritical case.
Indeed, there exists a positive constant $d_{\zeta}$ depending only on the distribution of $\zeta$ such that
$$
\P_0(A_{\infty}(v)) \ge d_{\zeta}(v^*-v).
$$
When $P(\zeta=0)=P(\zeta=1)=1/2$ this is the lower bound given by Proposition 2.4 of \cite{pem}.
The general case follows by coupling and rescaling 
\footnote{First replace $\zeta$ by a random variable $\widetilde{\zeta}$ such that $P(\widetilde{\zeta}=\zeta_+)=P(\widetilde{\zeta}=\zeta_-)=1/2$,
thus lowering the survival probability. Then replace $\widetilde{\zeta}$ by $(\widetilde{\zeta}-\zeta_-)/(\zeta_+-\zeta_-)$.}.

The precise behavior of the survival probability in fact depends on the behavior of $P(v \le \zeta < v^*)$.
When there exists $v<v^*$ such that $P(v \le \zeta <v^*)=0$ there exists a constant $D_{\zeta}$ such that 
$$
\P_0(A_{\infty}(v)) \le D_{\zeta}(v^*-v).
$$
When $P(\zeta=0)=P(\zeta=1)=1/2$ this is the upper bound given by Proposition 2.4 of \cite{pem}.
We get the previous result by coupling and rescaling as above.

When, on the contrary, $P(v \le \zeta <v^*)>0$ for all $v<v^*$ the survival probability can be larger.
Indeed, set $p_v=P(\zeta \ge v)>1/2$.
The Galton-Watson process obtained by keeping all steps greater or equal to $v$ is supercritical.
Denote by $q_v>0$ its survival probability.
We have
$$
(1-p_v+p_v(1-q_v))^2=(1-q_v).
$$
Therefore 
$$
q_v=\frac{2p_v-1}{p_v^2}.
$$
We then have:
$$
P_0(A_{\infty}(v)) \ge q_v = \frac{2p_v-1}{p_v^2} \sim 8P( v \le \zeta < v^*)
$$
(let us recall that $P(\zeta=v^*)=1/2$ and that $P(\zeta>v^*)=0$).
Thus, the survival probability $P_0(A_{\infty}(v))$ can tend arbitrarily slowly to $0$.

\item Supercritical case, i.e.\ $P(\zeta=\zeta_+)>1/2$.
By Lemma \ref{l:tstar-gw} we have $v^*=\zeta_+$, as in the critical case.
Let us denote by $q>0$ the survival probability of the Galton-Watson tree.
Then, we have:
$$
\P_0(A_{\infty}(v)) \to \P_0(A_{\infty}(v_*))=q>0.
$$
\end{enumerate}

\end{document}